\newtheorem{theorem}{Theorem}
\newtheorem{lemma}{Lemma}
\newtheorem{proposition}{Proposition}
\newtheorem{corollary}{Corollary}
\newtheorem{assumption}{Assumption}
\newenvironment{proof}[1][Proof]{\begin{trivlist}
\item[\hskip \labelsep {\bfseries #1}]}{\end{trivlist}}
\newenvironment{remark}[1][Remark]{\begin{trivlist}
\item[\hskip \labelsep {\bfseries #1}]}{\end{trivlist}}
\newcommand{\qed}{\nobreak \ifvmode \relax \else
      \ifdim\lastskip<1.5em \hskip-\lastskip
      \hskip1.5em plus0em minus0.5em \fi \nobreak
      \vrule height0.75em width0.5em depth0.25em\fi}
\def\sml#1{{#1}}
\def\a{\alpha}
\def\b{\beta}
\def\g{\gamma}
\def\Rc{\mathcal{R}}
\def\Gc{\mathcal{G}}
\def\Vc{\mathcal{V}}
\def\Ec{\mathcal{E}}
\def\Xc{\mathcal{X}}
\def\Fc{\mathcal{F}}
\def\Nc{\mathcal{N}}
\def\Es{\mathsf{E}}
\def\1b{\mathbf{1}}
\def\0b{\mathbf{0}}
\def\la{\langle}
\def\ra{\rangle}
\def\eb{\mathbf{e}}
\def\zb{\mathbf{z}}
\def\xb{\mathbf{x}}
\def\yb{\mathbf{y}}
\def\gb{\mathbf{g}}
\def\pb{\mathbf{p}}
\def\sb{\mathbf{s}}
\def\Rb{\mathbf{R}}
\def\bxb{\bar{\mathbf{x}}}
\def\byb{\bar{\mathbf{y}}}
\def\Lc{\mathcal{L}}
\def\F#1{\mathcal{F}\left(#1\right)}
\def\lb{\boldsymbol{\lambda}}
\def\blb{\bar{\boldsymbol{\lambda}}}
\def\txb{\tilde{\mathbf{x}}}
\def\eb{\boldsymbol{\epsilon}}
\def\tlb{\tilde{\boldsymbol{\lambda}}}
\def\tb{\boldsymbol{\theta}}
\def\btb{\bar{\boldsymbol{\theta}}}
\def\tyb{\tilde{\yb}}
\def\byb{\bar{\yb}}
\def\Hc{\mathcal{H}}
\def\l{\ell}
\def\lda{\lambda}
\def\tl{\tilde{\lambda}}
\def\ccalJ{\mathcal{V}}
\def\ccalK{\mathcal{K}}
\DeclareMathOperator*{\argmin}{arg\,min}
\title{On the Sublinear Regret of Distributed Primal-Dual Algorithms for Online Constrained Optimization}
\author{Soomin~Lee,~
and~Michael~M.~Zavlanos,~
\thanks{This work is supported by NSF under grant ECCS \#1543872, and by ONR under grant \#N000141410479.}
\thanks{Soomin Lee and Michael M. Zavlanos are with the Dept. of Mechanical Engineering and Materials Science, Duke University, Durham, NC, 27708, USA, {\tt\footnotesize \{s.lee,michael.zavlanos\}@duke.edu}.}
}
\begin{document}
\maketitle
\begin{abstract}
This paper introduces consensus-based primal-dual methods for distributed online optimization
where the time-varying system objective function $f_t(\xb)$ is given as the sum of local agents' objective functions, i.e., $f_t(\xb) = \sum_i f_{i,t}(\xb_i)$, and the system constraint function $\gb(\xb)$ is given as the sum of local agents' constraint functions, i.e., $\gb(\xb) = \sum_i \gb_i (\xb_i) \preceq \0b$.
At each stage, each agent commits to an adaptive decision pertaining only to the past and locally available information, and incurs a new cost function reflecting the change in the environment.
Our algorithm uses weighted averaging of the iterates for each agent to keep local estimates of the global
constraints and dual variables.
We show that the algorithm achieves a regret of order $O(\sqrt{T})$ with the time horizon $T$,
in scenarios when the underlying communication topology is time-varying and jointly-connected.
The regret is measured in regard to the cost function value as well as the constraint violation.
Numerical results
for online routing in wireless multi-hop networks with uncertain channel rates
are provided to illustrate the performance of the proposed algorithm.
\end{abstract}

\section{Introduction\label{sec:intro}}
\IEEEPARstart{M}{any} engineering applications concerning the coordination of multi-agent systems can be cast as distributed optimization problems over networks, see e.g., \cite{rabbat,con01,ram_info,Durham-Bullo}.  In these applications, distributed agents, which only have access to local parameters, often try to minimize a global cost function subject to global constraints. The main feature of most protocols designed to carry this optimization over a network is that the agents only use information received from their immediate (one-hop) neighbors. The underlying communication structure of the agents can be cast as a graph, often directed and time-varying.

A common feature in many practical applications is that they live in dynamically changing and uncertain environments. One of the existing methods which can be used to address the uncertainties arising in these problems is online optimization, where the cost function changes over time and an adaptive decision pertaining only to the past information has to be made at each stage. The objective in online optimization is to reduce \textit{regret}, a quantity capturing the difference between the accumulated cost incurred up to some arbitrary time and the cost obtained from the best fixed point chosen in hindsight.
Online optimization has been studied extensively both in the optimization and machine learning community, see e.g., \cite{Zinkevich03onlineconvex,Hazan06logarithmicregret,Shalev-Shwartz:2012}.
Most of the online optimization algorithms are built on gradient descent methods which exploit the convexity of a cost function.

In this paper, we focus on online distributed optimization problems over large networks using a \textit{consensus} framework,
where at each stage, each agent faces a new cost function reflecting the change in its nearby environment and can share its information only with neighbors.
There are two distinctive ways that consensus can be used to obtain distributed optimization methods: \textit{(A)} a decentralized weighted averaging is interleaved between optimization steps as a means of information mixing and network-wide agreement, and \textit{(B)} the optimization problem is recast in an equivalent decomposable form by adding consistency constraints on any coupled quantities along all edges.

\sml{
The consensus framework \textit{(A)} was used in \cite{UW-conf,UW-arxiv}, where a variant of the dual-averaging method by Nesterov \cite{NesterovPD} for online distributed optimization was proposed for undirected networks \cite{UW-conf} and for time-invariant digraphs allowing time-varying weights \cite{UW-arxiv}.
Other recent work includes \cite{Queen-conf,LNR-online} that employ the push-sum protocol \cite{tsianos-pushsum,ANAO}, which allows for time-varying weight-imbalanced digraphs.
Specifically, the work in \cite{Queen-conf} and \cite{LNR-online} is a variant of the distributed subgradient methods \cite{Nedic2009} and the dual-averaging method \cite{NesterovPD}, respectively, for online distributed optimization.
Recently, online distributed ADMM has also been proposed in \cite{UW-ADMM}, by incorporating either decentralized dual-averaging or subgradient descent in the ADMM process for time-invariant graph.}
On the other hand, the consensus framework \textit{(B)} was used in \cite{UCSD-conf,UCSD-journal}, where distributed online subgradient descent methods were proposed
for time-varying weight-balanced digraphs that employed proportional-integral feedback for handling the consistency constraints among neighboring agents.

In this paper, we propose an online distributed variant of the saddle point algorithm by Arrow-Hurwicz-Uzawa \cite{AHU-PD}, also known as Primal-Dual method, which is based on the consensus framework \textit{(A)}.
\sml{An important contribution of this work is that it can properly handle (time-invariant) global constraints of the form $\sum_{i=1}^N\gb_i(\xb_i) \preceq \0b$, where the function $\gb_i$ is only known to agent $i$.
A lot of practical applications in control \cite{optimalcontrol-book,Camponogara,Control-app}, network flow control \cite{Bertsekas-Network}, wireless communications \cite{Network-Lifetime}, machine learning \cite{Bianchi-book,Tibshirani-book}, and image processing \cite{Image-book} involve such globally coupled constraints.
The main advantage of the Primal-Dual method is that it
allows to decompose the global constraint into a sum of
local terms in the dual domain. This results in regret of order $O(\sqrt{T})$ over any sequence of jointly connected time-varying digraphs, where $T$ is the time horizon. The dependence of the regret on the number of
agents and on the structural parameters of the problem and the network is
also analyzed.}

To the best of our knowledge, this is the first work in online distributed optimization
showing a sublinearly bounded regret with globally coupled constraints.
\sml{All of the work mentioned above only considers
a global cost function $f_t(\xb) = \sum_{i=1}^N f_{i,t}(\xb)$
and projection of the iterates onto a common constraint set $X$. Note that the global constraints $\sum_{i=1}^N\gb_i(\xb_i) \preceq \0b$ cannot be represented as $X = \cap_{i=1}^N X_i$, $X = \Pi_{i=1}^N X_i$, or $X = X_i$, where $X_i$ is a local constraint set of agent $i$.
Therefore, distributed online optimization problems involving such global constraints cannot be properly handled by the existing methods.}

Our method is most closely related to the recent work in \cite{UPenn-conf} where a saddle point algorithm is considered for online distributed optimization.
The difference is that \cite{UPenn-conf}
is built on the consensus framework \textit{(B)} 
on a fixed undirected graph without any global constraints
while our work is built on the consensus framework \textit{(A)}
 on a sequence of time-varying digraphs
with global constraints.
Our algorithm uses weighted averaging of the iterates for each agent to keep local estimates of any global information,
which allows us to obtain stronger and more general results on the regret according to the cost function value as well as the constraint violation. 
Our regret analysis also characterizes the dependence on the network parameters, such as the number of nodes and the network connectivity.

On a broader scale, the work in this paper is also related to the distributed primal-dual methods for time-invariant optimization, e.g.,
the penalty based distributed algorithm by \cite{Martinez-PD} and the distributed primal-dual perturbed algorithm by \cite{perturbedPD}.



This paper is organized as follows.
In Section \ref{sec:prelim}, we introduce notational conventions that are used throughout the paper and briefly review the Lagrangian duality, centralized primal-dual method and saddle point theorem.
In Section \ref{sec:prob}, we formulate the online distributed constrained optimization problem under consideration
and discuss an online routing problem in wireless multi-hop networks with uncertain channel rates
as an application of the proposed algorithm.
In Section \ref{sec:algo}, we provide our proposed consensus-based online primal-dual algorithm.
In Section \ref{sec:conv}, we provide assumptions and establish $O(\sqrt{T})$ bounds on the regret in the cost function as well as constraint violation incurred by the algorithm.
In Section \ref{sec:sim}, we present simulation results on the online routing problem in Section \ref{sec:prob}.
Lastly, in Section \ref{sec:con}, we summarize our contribution and conclude the paper. 


\section{Preliminaries \label{sec:prelim}}
In this section, we introduce basic Lagrangian duality and the related saddle point algorithm for a time-invariant 
centralized optimization problem that will be useful for the subsequent development.
We start by introducing some notational conventions that we use throughout the paper.

\subsection{Notations and Terminologies}
Let $\mathbb{R}$ and $\mathbb{R}_+$ denote the set of real and nonnegative real numbers, respectively.
All vectors are viewed as column vectors.
We denote $\0b$ by the vector of all zeros whose dimension varies accordingly.
For a vector $\mathbf{a} \in \mathbb{R}^n$, we use $[\mathbf{a}]_+$ to denote the coordinate-wise projection of $\mathbf{a}$ onto $\mathbb{R}_+^n$.
We use $[\xb]_i$ to denote the $i$-th component of a vector $\xb$.
We use $P_X[\xb]$ to denote the Euclidean projection of a vector $\xb$ on the set $X$, i.e., $P_X[\xb] = \argmin_{\yb\in X}\|\yb-\xb\|^2$.
For a matrix $A \in \mathbb{R}^{n\times m}$, we use $[A]_{ij}$ to denote the entry of $i$-th row and $j$-th column.
We use $A'$ to denote the transpose of a matrix $A$.
The inner product of two vectors $\mathbf{a}$ and $\mathbf{b}$ is $\mathbf{a}'\mathbf{b}$.
Unless otherwise stated, $\|\cdot\|$ represents the standard Euclidean norm.
For any $N \ge 1$, the set of integers $\{1,\ldots,N\}$ is denoted by $[N]$.
The symbols $\preceq$ and $\succeq$ are understood as component-wise inequalities.
We use $\Es[Z]$ to denote the expectation of a random variable $Z$.

A set $X \subseteq \mathbb{R}^n$ is \textit{convex} if for all $\a \in [0,1]$ and $\xb, \yb \in X$
\[
\a \xb + (1-\a) \yb \in X.
\]
A function $f:\mathbb{R}^n \to \mathbb{R}$ is convex in $X$ if the set $X$ is convex and for all $\a \in [0,1]$ and $\xb, \yb \in X$
\[
f(\a \xb + (1-\a) \yb) \le \a f(\xb) + (1-\a)f(\yb),
\]
and $f$ is \textit{strictly convex} if the above inequality is strictly satisfied for all $\a \in (0,1)$, $\xb, \yb \in X$, and $\xb \neq \yb$.
Equivalently, $f$ is convex in $X$ if the set $X$ is convex and for all $\xb, \yb \in X$,
\[
f(\xb) \ge f(\yb) + \la \nabla f(\yb),\xb-\yb \ra.
\]
We say a function $g:X\to \mathbb{R}$ is (strictly) $concave$ over $X$ if $-g$ is (strictly) convex over $X$.

Consider a convex-concave function $\Lc$ defined over $X \times \Lambda \subseteq \mathbb{R}^n \times \mathbb{R}^m$, i.e., $\Lc(\cdot,\lb):X \to \mathbb{R}$ is convex for every $\lb \in \Lambda$ and $\Lc(\xb,\cdot):\Lambda \to \mathbb{R}$ is concave for every $\xb \in X$.
A vector pair $(\xb^*,\lb^*)$ is called a \textit{saddle point} of the function $\Lc$ over $X \times \Lambda$ if
for any $\xb \in X$ and $\lb \in \Lambda$
\[
\Lc(\xb^*,\lb) \le \Lc(\xb^*,\lb^*) \le \Lc(\xb,\lb^*).
\]

\subsection{Lagrangian Duality and Saddle Point Algorithm}
Consider the following constrained optimization problem:
\begin{align} \label{eqn:primal}
\min_{\xb \in X} f(\xb)
\quad \text{s.t. } \gb(\xb) \preceq \0b,
\end{align}
where $f:\mathbb{R}^n \to \mathbb{R}$ is a convex function, $X \subseteq \mathbb{R}^n$ is a nonempty closed convex set,
and \sml{$\gb:\mathbb{R}^n \to \mathbb{R}^m$ is a
component-wise convex function; specifically, $\gb = (g_1,\ldots,g_m)$
with each $g_i:\mathbb{R}^n \to \mathbb{R}$ being convex. }
We call this problem a $primal$ problem. Let $f^*$ denote its optimal value.

Consider the following Lagrange dual problem of (\ref{eqn:primal}):
\begin{align} \label{eqn:dual}
\max_{\lb \in \mathbb{R}^m_+} q(\lb),
\end{align}
where $q:\mathbb{R}^m_+\to \mathbb{R}$ is the dual function given by
\begin{align}\label{eqn:dual_fn}
q(\lb) = \min_{\xb \in X} \Lc(\xb,\lb)
\end{align}
and
$\Lc: X \times \mathbb{R}^m_+ \to \mathbb{R}$ is the Lagrangian function given by
\begin{align}\label{eqn:Lag}
\Lc(\xb,\lb)
= &~ f(\xb)+ \lb' \gb(\xb).
\end{align}
Let $q^*$ denote the optimal value of (\ref{eqn:dual}).
It is well known that the weak duality $q^* \le f^*$ always holds, \sml{and under the Slater's condition,
i.e., there exist a vector $\tilde{\xb} \in \mathbb{R}^n$ such that $g_i(\tilde{\xb}) < 0$ for all $i = 1,\ldots, m$,}
the strong duality $q^* = f^*$ also holds \cite{Bertsekas-NP,BNO}.

One of the classical dual methods for solving problem (\ref{eqn:dual}) is the saddle point algorithm of Arrow-Hurwicz-Uzawa \cite{AHU-PD}, also known as the Primal-Dual method. Given an initial point $(\xb_1,\lb_1) \in X \times \mathbb{R}^m_+$, the method iteratively updates the primal-dual variables as follows:  For $t\ge 1$,
\begin{subequations}
\begin{align}
\xb_{t+1} = &~ P_X[\xb_t - \a_t \nabla_{\xb} \Lc(\xb_t,\lb_t)]\label{eqn:pd1}\\
\lb_{t+1} = &~ [\lb_t + \a_t \nabla_{\lb} \Lc(\xb_t,\lb_t)]_+,\label{eqn:pd2}
\end{align}
\end{subequations}
where $\{\a_t\}$ is a sequence of positive step sizes, $\nabla_{\xb} \Lc(\xb_t,\lb_t)$ and $\nabla_{\lb} \Lc(\xb_t,\lb_t)$ are the gradients of the Lagrangian $\Lc$ at $(\xb_t,\lb_t)$ with respect to $\xb$ and $\lb$, respectively.
\sml{More specifically,
in step \eqref{eqn:pd1},
the algorithm estimates the primal variables $\xb$ that attain the current function value $q(\lb_t)$
by approximately minimizing $\Lc(\xb,\lb_t)$ using gradient descent. 
In step \eqref{eqn:pd2}, the algorithm
iteratively maximizes the dual function
using gradient ascent.
Here the assumption is that the ascent direction $\nabla_{\lb} q(\lb_t)$ is approximately $\nabla_{\lb} \Lc(\xb_t,\lb_t)$.}

\sml{It has been extensively studied that the sequence $(\xb_t,\lb_t)$ generated by the method (\ref{eqn:pd1})-(\ref{eqn:pd2}) converges to a saddle point of the Lagrangian function $\Lc$ under the bounded gradient assumption, i.e., under the assumption that there exists a constant $L$ such that for all $t \ge 1$:
\[
\|\nabla_{\xb}\Lc(\xb_t,\lb_t)\| \le L,~ \|\nabla_{\lb}\Lc(\xb_t,\lb_t)\| \le L,
\]
(see e.g. \cite{AHU-PD} for more details).} This and the following saddle point theorem say that the Primal-Dual method in (\ref{eqn:pd1})-(\ref{eqn:pd2}) solves the original problem (\ref{eqn:primal}).
\begin{theorem}\label{thm:saddle}
\cite[Proposition 5.1.6]{Bertsekas-NP} \textit{(Saddle Point Theorem)} The point $(\xb^*,\lb^*) \in X \times \mathbb{R}_+^m$ is a primal-dual optimal pair of problems (\ref{eqn:primal}) and (\ref{eqn:dual}) if and only if it is a saddle point of the Lagrangian (\ref{eqn:Lag}), i.e., for all $\xb \in X$ and $\lb \in \mathbb{R}_+^m$
\[
\Lc(\xb^*,\lb) \le \Lc(\xb^*,\lb^*) \le \Lc(\xb,\lb^*).
\]
\end{theorem}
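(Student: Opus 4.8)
The plan is to establish the two implications of the equivalence directly from the definition of the Lagrangian $\Lc(\xb,\lb) = f(\xb) + \lb'\gb(\xb)$, using only weak duality $q^* \le f^*$ as an external ingredient; no convexity of $f$ or $\gb$ is actually needed for the equivalence itself (convexity together with Slater's condition is only what guarantees that a primal-dual optimal pair — meaning $\xb^*$ solves \eqref{eqn:primal}, $\lb^*$ solves \eqref{eqn:dual}, and there is no duality gap — exists in the first place). For the direction ``saddle point $\Rightarrow$ primal-dual optimal pair'', I would first exploit the left inequality $\Lc(\xb^*,\lb) \le \Lc(\xb^*,\lb^*)$, which simplifies to $\lb'\gb(\xb^*) \le (\lb^*)'\gb(\xb^*)$ for every $\lb \succeq \0b$. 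Taking $\lb = \lb^* + t e_i$ and letting $t \to \infty$ forces $g_i(\xb^*) \le 0$ for each $i$, so $\xb^*$ is primal feasible; taking $\lb = \0b$ gives $(\lb^*)'\gb(\xb^*) \ge 0$, and since $\lb^* \succeq \0b$ and $\gb(\xb^*) \preceq \0b$ already force $(\lb^*)'\gb(\xb^*) \le 0$, we obtain the complementary slackness relation $(\lb^*)'\gb(\xb^*) = 0$. Feeding this into the right inequality $\Lc(\xb^*,\lb^*) \le \Lc(\xb,\lb^*)$ yields $f(\xb^*) \le f(\xb) + (\lb^*)'\gb(\xb)$ for all $\xb \in X$, and restricting to feasible $\xb$ (where the last term is nonpositive) shows $\xb^*$ is primal optimal with $f(\xb^*) = f^*$. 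The same right inequality says $\xb^*$ attains the minimum defining $q(\lb^*)$, so $q(\lb^*) = \Lc(\xb^*,\lb^*) = f(\xb^*) = f^*$; combined with weak duality and $q(\lb^*) \le q^*$ this gives $q(\lb^*) = q^* = f^*$, i.e., $\lb^*$ is dual optimal with zero duality gap.

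For the converse, assume $\xb^*$ is primal optimal (hence $\gb(\xb^*) \preceq \0b$ and $f(\xb^*) = f^*$), $\lb^*$ is dual optimal, and $f(\xb^*) = q(\lb^*)$. I would write the chain $q(\lb^*) = \min_{\xb \in X} \Lc(\xb,\lb^*) \le \Lc(\xb^*,\lb^*) = f(\xb^*) + (\lb^*)'\gb(\xb^*) \le f(\xb^*)$, where the last step uses $\lb^* \succeq \0b$ and $\gb(\xb^*) \preceq \0b$. Since the two ends of the chain coincide, every inequality is an equality: this yields complementary slackness $(\lb^*)'\gb(\xb^*) = 0$ and the fact that $\xb^*$ minimizes $\Lc(\cdot,\lb^*)$ over $X$, which is precisely the right-hand saddle inequality $\Lc(\xb^*,\lb^*) \le \Lc(\xb,\lb^*)$ for all $\xb \in X$. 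For the left-hand saddle inequality, for any $\lb \succeq \0b$ we have $\Lc(\xb^*,\lb) = f(\xb^*) + \lb'\gb(\xb^*) \le f(\xb^*) = f(\xb^*) + (\lb^*)'\gb(\xb^*) = \Lc(\xb^*,\lb^*)$, again using $\lb'\gb(\xb^*) \le 0$ and complementary slackness. This completes the equivalence.

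I do not expect a genuine obstacle, since the statement is classical and each step is a one-line manipulation; the only points that require mild care are the unboundedness argument used to extract primal feasibility (picking $\lb = \lb^* + t e_i$ and sending $t \to \infty$) and the bookkeeping in the forward direction, where complementary slackness must be derived \emph{before} it is substituted into the other saddle inequality. I would therefore order the argument so that feasibility and complementary slackness are established first, and only then used to conclude primal and dual optimality.
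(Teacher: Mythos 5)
The paper does not prove this statement at all: it is quoted verbatim from \cite[Proposition 5.1.6]{Bertsekas-NP}, so there is no in-paper argument to compare yours against. Your proof is the standard one and is correct: the left saddle inequality with $\lb = \lb^* + t e_i$, $t \to \infty$, gives primal feasibility, $\lb = \0b$ gives complementary slackness, and the right saddle inequality then delivers primal optimality and $q(\lb^*) = f^*$, with the converse following from collapsing the chain $q(\lb^*) \le \Lc(\xb^*,\lb^*) \le f(\xb^*)$ into equalities. The one point worth flagging is definitional rather than logical: the ``only if'' direction is false if ``primal-dual optimal pair'' is read merely as ``$\xb^*$ solves \eqref{eqn:primal} and $\lb^*$ solves \eqref{eqn:dual}'' in the presence of a duality gap, so your explicit inclusion of $f(\xb^*) = q(\lb^*)$ in the hypothesis is necessary; this matches Bertsekas's actual formulation (an optimal solution--geometric multiplier pair) and is implicitly what the paper intends, since it invokes Slater's condition to guarantee strong duality. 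You also correctly observe that convexity plays no role in the equivalence itself, only in guaranteeing that such a pair exists.
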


\section{Problem Formulation and Application\label{sec:prob}}
Consider a networked system consisting of $N$ agents indexed by the set $\Vc = [N]$.
The communication among the network agents is governed by a sequence of time-varying weighted digraphs $\Gc_t = (\Vc,\Ec_t, W_t)$, for $t \ge 0$. There exists a directed link $(i,j)\in \Ec_t \subseteq \Vc\times \Vc$ if and only if agent $i$ can receive a message from agent $j$ at time $t$.
Here $W_t \in \mathbb{R}^{N\times N}$ is a weight matrix respecting the topology of the graph $\Gc_t$, i.e.,
$[W_t]_{ij} >0$ if and only if $(i,j) \in \Ec_t$ and $[W_t]_{ij}=0$, otherwise.
Each entry $[W_t]_{ij}$ represents a weight that agent $i$ allocates to the incoming link $(i,j)$. We denote the set of incoming neighbors of agent $i$ at time $t$ by
\begin{align*}
\Nc_{i,t} = \{j \mid (i,j) \in \Ec_t\} \cup \{i\},
\end{align*}
where we always include agent $i$ itself to its own set of neighbors.

We have the following assumption on the communication model.
\begin{assumption}\label{assume:network}
For all $t \ge 1$, the weighted graphs $\Gc_t = (\Vc,\Ec_t,W_t)$ satisfy:
\begin{itemize}
\item[(a)] There exists a scalar $\eta \in (0,1)$ such that $[W_t]_{ij} \ge \eta$ if $j \in \Nc_{i,t}$. Otherwise, $[W_t]_{ij} = 0$.
\item[(b)] The weight matrix $W_t$ is doubly stochastic, i.e., $\sum_{i=1}^N[W_t]_{ij}=1$ for all $j \in \Vc$ and $\sum_{j=1}^N[W_t]_{ij}=1$ for all $i \in \Vc$.
\item[(c)] There exists a scalar $Q > 0$ such that the graph $\left(\Vc,\cup_{\ell=0,\ldots,Q-1} \Ec_{t+\ell}\right)$ is strongly connected for any $t \ge 1$.
\end{itemize}
\end{assumption}
Assumption \ref{assume:network}(a) ensures that the weight matrices $W_t$ respect the underlying topology $\Gc_t$ for every $t$ so that the communication is distributed. The $\eta$-nondegeneracy states that all positive weights are bounded away from zero by $\eta$. This ensures that certain entries do not disappear too quickly, but the network agents need not know the $\eta$ value in running the algorithm.
Assumption \ref{assume:network}(b) is required for balanced communication
as removing the doubly stochasticity might introduce unwanted bias in the optimization \cite{Ram2010,TsianosRabbat}.
Assumption~\ref{assume:network}(c) ensures that there exists a path
from one agent to every other agent within any bounded interval of length $Q$.
We also say that such a sequence of graphs is $Q$-strongly connected.

\subsection{Online Distributed Constrained Optimization}
In online optimization, we model uncertainties as
a sequence of time-varying objective functions,
which are not known in advance.
More specifically, at each time $t$, each agent $i \in \Vc$ must select a state $\xb_{i,t}$ from $X_i$, which is a closed and bounded subset of $\mathbb{R}^{n_i}$, 
without the knowledge of the current cost $f_{i,t}$. After this, agent $i$ observes $f_{i,t}$, which is chosen by nature and comes from a fixed class of convex functions $f_i:X_i \to \mathbb{R}$,
and incurs the cost of $f_{i,t}(\xb_{i,t})$.
The network-wide cost that agents want to minimize at time $t$ is then
\begin{align}\label{eqn:netfn}
f_t(\xb) = \sum_{i=1}^N f_{i,t}(\xb_i),
\end{align}
where the vector $\xb \in X$ is distributed among the agents, i.e., $\xb = (\xb_1,\ldots,\xb_N) \in X$ with $X = X_1 \times \ldots \times X_N\subseteq \mathbb{R}^n$ and $n = \sum_{i=1}^N n_i$.
We emphasize here that $f_t$ is neither collectively known to any of the agents nor available at any single location.
In addition to this, there exist some global (time-invariant) constraint functions
\begin{align}\label{eqn:global_con}
\gb(\xb) = \sum_{i=1}^N \gb_i(\xb_i) \preceq \0b,
\end{align}
where each $\gb_i = (g_{i1},\ldots,g_{im})$ with $g_{ij}:\mathbb{R}^{n_i} \to \mathbb{R}$ is only available to agent $i$.
We assume throughout the paper that the feasible set
\[
\Xc = \{\xb \in X \mid \gb(\xb) \preceq \0b\}
\]
is nonempty.

Since the cost functions $\{f_{i,t}\}_{t\ge 1}$ are only revealed to agent $i$ and each constraint function $\gb_i$ is only known to agent $i$, each agent must determine its state $\xb_{i,t}$
based on what it ``thinks'' the network cost function (\ref{eqn:netfn}) is subject to the global constraint (\ref{eqn:global_con}).
We will postpone the detailed strategy based on which each agent $i$ chooses such $\xb_{i,t}$ to the next section, except to note that $\xb_{i,t}$
depends only on the local cost functions revealed before time $t$, i.e., $\{f_{i,s}\}_{s=1}^{t-1}$, the local constraints $\gb_i$, and the coordination of the information received from the neighbors.

Due to the time-varying nature of online optimization, we need a different metric to define its convergence. The objective in online optimization is to reduce \textit{regret}, a quantity
analogous to the distance from the optimal function value in time-invariant optimization.
More specifically, the regret at arbitrary 
$T \ge 1$ is defined as follows:
\begin{align}\label{eqn:net_reg}
\Rc(T) = \sum_{t=1}^T\sum_{i=1}^N f_{i,t}(\xb_{i,t})- \sum_{t=1}^T\sum_{i=1}^N f_{i,t}(\xb_i^*),
\end{align}
which computes the difference between
the summation of the local cost incurred at each agent's state $\xb_{i,t}$
and the best fixed point chosen from an offline and centralized view.
Here $\xb^*$ is defined as
\begin{equation}\label{eqn:xopt}
\xb^* \triangleq \inf_{\xb\in \Xc}\sum_{t=1}^T\sum_{i=1}^N f_{i,t}(\xb_i),
\end{equation}
i.e., $\xb^*$ is a single state that could have been achieved with perfect advance knowledge of all cost functions $\{f_{i,t}\}_{i\in \Vc, 1\le t \le T }$ and without any restriction on the communication between any agents.

\sml{
Essentially, the regret captures the accumulation of error due to the fact that decisions are made without knowledge of the current objective functions. This error will will keep accumulating as the time horizon $T$ grows, however, it is expected to accumulate at a rate that is sublinear, i.e., $\Rc(T)=o(T)$. If this is the case, then the average accumulated error, i.e., the average regret $\Rc(T)/T$, will converge to zero as the time horizon $T$ grows large. This means that eventually, the decision makers will be able to learn the objective function and make decisions that are more accurate.}

Since this is an online \textit{constrained} problem, we also need a notion of regret associated with the constraints 
which is analogous to the regret in (\ref{eqn:net_reg}) associated with the cost function.
Specifically, for a penalty function $\Fc:\mathbb{R}^m \to \mathbb{R}^m_+$,
we consider the following cumulative constraint violation for some arbitrary $T \ge 1$:
\begin{align}\label{eqn:con_reg}
\Rc^c(T) = \left\|\sum_{t=1}^T\F{\sum_{i=1}^N \gb_i(\xb_{i,t})}\right\|,
\end{align}
which can be used to show that the constraints in (\ref{eqn:global_con}) are satisfied in the long run.
As with the regret in \eqref{eqn:net_reg}, in this case too we need to show that for some arbitrary $T \ge 1$, the cumulative constraint violation is sublinear in $T$, i.e.,
\[
\Rc^c(T) = o(T).
\]

Thus, the goal of this paper is to design a distributed update rule for each agent $i \in \Vc$ such that
(a) the regret (\ref{eqn:net_reg}) and constraint violation (\ref{eqn:con_reg}) are sublinear over the time horizon $T$ (such that the average $\Rc(T)/T$ and $\Rc^c(T)/T$ converge to zero),
and (b) show reasonable dependence on the total number of network agents and on the topology of the communication graphs.

\subsection{Application to optimal wireless networking under channel uncertainty}
Consider a  wireless network consisting of $N$ source nodes indexed by $i\in\ccalJ=\{1,\dots,N\}$ that route information to $K$ Access Points (APs) indexed by $i\in\ccalK=\{N+1,\dots,N+K\}$. We assume that APs only receive and do not transmit any data.  Point-to-point connectivity is modeled through a rate function $R_{ij}$ that determines the amount of information that is transmitted from node $i\in\ccalJ$ and is correctly decoded by node $j\in\ccalJ\cup\ccalK$. We assume that direct communication with the APs is not always possible, so the source nodes need to route data to the APs in a multi-hop fashion. Routing of packets is due to routing decisions $T_{ij}$ that represent the fraction of time that node $i$ selects node $j$ as its intended destination. Note that, since the routing variables $T_{ij}$ represent time slot shares they need to satisfy $0\leq T_{ij}\leq 1$ and $ \sum_{j\in\ccalJ\cup\ccalK} T_{ij}= 1$. The products $R_{ij}T_{ij}$ denote the point-to-point rate of information that is transmitted by node $i$ and correctly received by node $j$; see Figure~\ref{fig_network}.

\begin{figure}[t]
  \centering
  \includegraphics[width=.8\linewidth]{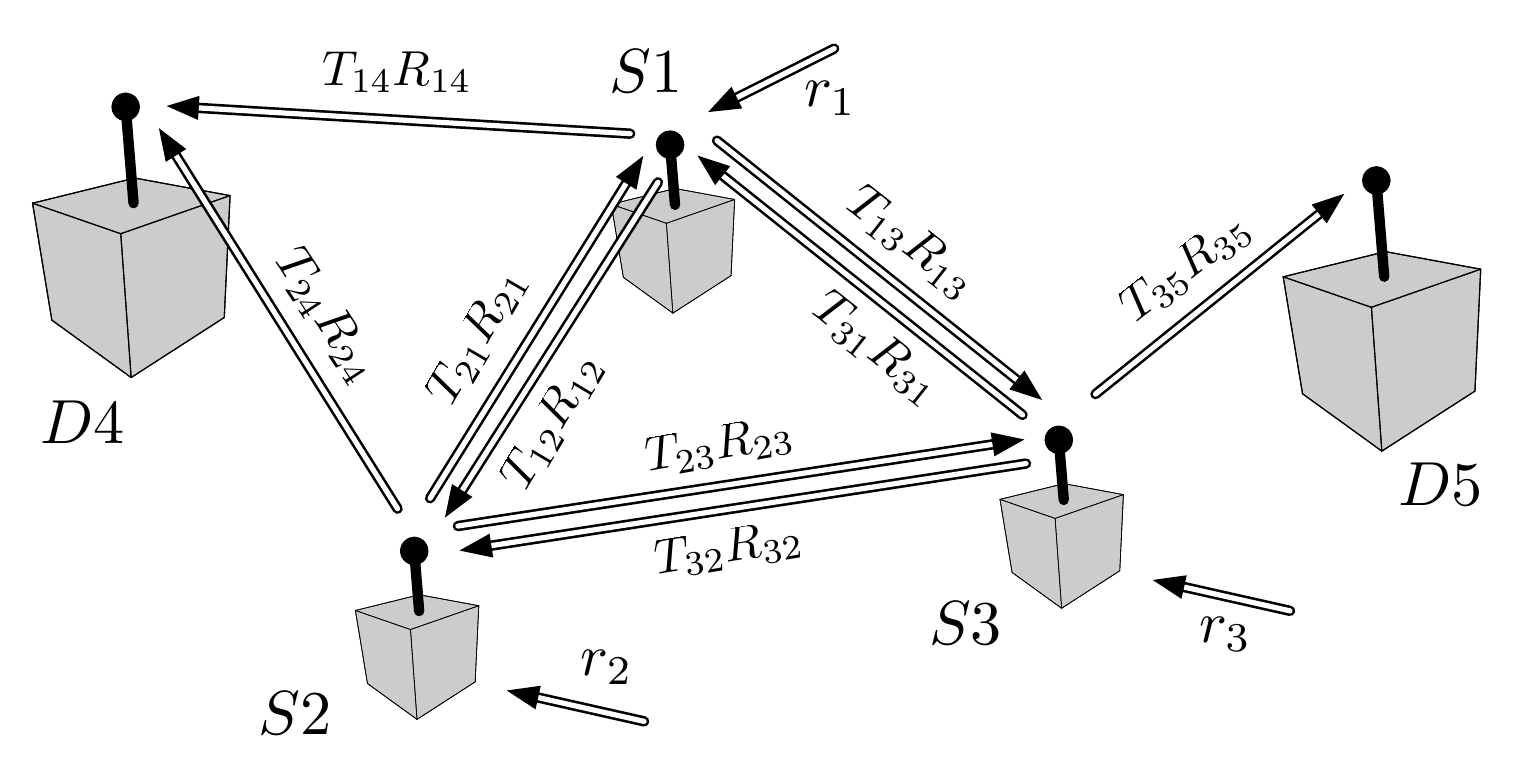}\\
  \caption{Wireless network consisting of two destinations (D) and three sources (S).
  Shown are the packet rates $r_i$ generated by every source as well as the rates $T_{ij}R_{ij}$ sent from source $i$ and successfully decoded by node $j$, where $T_{ij}$ is the probability
  that node $i$ routes packets to node $j$ and $R_{ij}$ is the reliability
  of the channel between nodes $i$ and $j$.}\label{fig_network}
\end{figure}

Between their generation or arrival from another node and their transmission, packets are stored in a queue. The total rate at which packets arrive at the queue of node $i$ is $\sum \nolimits_{j\in\ccalJ}T_{ji}R_{ji}$. Similarly, the total rate at which packets leave the queue of node $i$ is $\sum \nolimits_{j\in\ccalJ\cup\ccalK}T_{ij}R_{ij}$. Collecting all routing decisions in a vector $T\in[0,1]^{N(N+K)}$, we
can express the end-to-end information rate $r_i(T)$ at node $i$ as
\begin{equation}\label{eqn_routing_constraint}
   r_i(T) =   \sum_{j\in\ccalJ\cup\ccalK}T_{ij}R_{ij} - \sum_{j\in\ccalJ}T_{ji}R_{ji}.
\end{equation}
This is the rate at which node $i$ can add data in the network. Integrity of the communication network requires that the end-to-end rates $r_i(T)$ exceed minimum thresholds $r_{i,\min}$ that capture the rate of data that is directly generated at nodes, i.e., $r_i(T)\geq r_{i,\min}$ for all $i\in\ccalJ$.


%

In practice, the  rates $R_{ij}$ depend on the signal-to-noise ratio \cite{Shorey2006} and, thus, they are subject to uncertainty. This is because received signal strength in wireless communications is typically affected by path loss due to the distance between the transmitter and the receiver, shadowing due to the presence of obstacles in the environment, and multipath fading  due to reflections and refractions of the electromagnetic waves. While path loss and shadowing effects can be captured using predictive models, multipath fading is difficult to predict \cite{Fink_uncertain}.
As a result, we let the rates $R_{ij}$ be random variables, and consider a stochastic equivalent of \eqref{eqn_routing_constraint} as the following:
\begin{align}
r_i(T) =  \sum_{j\in\ccalJ\cup\ccalK}T_{ij}\Es[R_{ij}] - \sum_{j\in\ccalJ}T_{ji}\Es[R_{ji}].
\end{align}
Note that the rates $R_{ij}$ are independent of each other, but they may not be identically distributed.

Therefore, any points satisfying the following inequalities are feasible operating points:
%
\begin{subequations}
\begin{align}
&r_{i,\min} \leq \sum_{j\in\ccalJ\cup\ccalK}T_{ij}\Es[R_{ij}] - \sum_{j\in\ccalJ}T_{ji}\Es[R_{ji}], \quad \forall \; i\in\ccalJ\label{NFa}\\
&\quad \sum \nolimits_{j\in\ccalJ\cup\ccalK} T_{ij}= 1, \quad \forall \; i\in\ccalJ,\label{NFb}\\
&\quad  0 \le T_{ij} \le 1, \quad \forall \; i\in\ccalJ,~j \in \ccalJ\cup\ccalK.\label{NFc}
\end{align}
\end{subequations}
Centralized solutions to \eqref{NFa}-\eqref{NFc} can incur a large communication cost to collect information about the network topology (contained in the rates $\Es[R_{ij}]$) at a central location and to communicate the routing variables $T_{ij}$ back to the nodes. They also entail significant delays and are vulnerable to failures. For this reason, we consider distributed algorithms, whereby each node $i$ has access only to variables available to its neighbors.

Denoting by  $\xb_i=\left[T_{i1} , T_{i2} , \dots , T_{i(N+K)}\right]^T\in\mathbb{R}^{N+K}$ the vector of decision variables of node $i$, and by $\Es[{\bf R}_i]\in\mathbb{R}^{N\times(N+K)}$ the matrix
\begin{align}
& \Es[{\bf R}_i] =\left[\begin{smallmatrix}   -\Es[R_{i1}] & 0 & \dots & 0 & \dots & 0  \\
 0 & -\Es[R_{i2}] & \dots & 0 & \dots & 0 \\
\vdots & \vdots & & \vdots & \dots & \vdots \\
 \Es[R_{i1}] & \Es[R_{i2}] & \dots & \Es[R_{iN}] & \dots & \Es[R_{i(N+K)}]  \\
 \vdots & \vdots & & \vdots & \dots & \vdots \\
 0 & 0 & \dots & -\Es[R_{iN}] & \dots & 0
\end{smallmatrix} \right], \nonumber
\end{align}\\
then, the flow constraints \eqref{NFa} can be compactly written as
\begin{equation}\label{eqn_queue_balance_matrix}
\sum_{i\in\mathcal{V}} \Es[{\bf R}_i] {\bf x}_i\succeq \mathbf{r}_{\min},
\end{equation}
where $\mathbf{r}_{\min} = [r_{1,\min},\ldots,r_{J,\min}]$.
By introducing auxiliary variables $\zb_i \in \mathbb{R}^N$ for $i \in \mathcal{V}$, the constraint \eqref{eqn_queue_balance_matrix} can be equivalently represented as
\begin{subequations}
\begin{align}
&\Es[{\bf R}_i] {\bf x}_i= \zb_i,~\forall i \in \mathcal{V}\label{eqcon1}\\
&\sum_{i\in\mathcal{V}} \zb_i \succeq \mathbf{r}_{\min}. \label{eqcon2}
\end{align}
\end{subequations}

Since the expectations $\Es[{\bf R}_i]$ are not known in advance, we reformulate \eqref{eqcon1}-\eqref{eqcon2} as an online optimization problem.
A time-varying objective of agent $i$ which measures the violation of the constraints in \eqref{eqcon1} is defined as:
\begin{align}\label{eqn:onlineprob1}
f_{i,t}(\xb_i,\zb_i) = \frac{1}{2}\left\|\zb_i - \bar{\Rb}_{i,t}\xb_i\right\|^2,
\end{align}
where $\bar{\Rb}_{i,t}$ is the empirical estimation of $\Es[\Rb_i]$ at time $t$, i.e., $\bar{\Rb}_{i,t} = \frac{1}{t}\sum_{k=1}^t \Rb_{i,k}$, and $\Rb_{i,k}$ is a random realization of $\Rb_i$ at time $k$.
Furthermore, the global constraints can be written as:
\begin{align}\label{eqn:onlineprob2}
\sum_{i=1}^N \gb_i(\xb_i,\zb_i) = \sum_{i=1}^N-\zb_i +\mathbf{r}_{\min} \preceq \0b.
\end{align}

\section{Online Distributed Algorithms for Globally Constrained Optimization\label{sec:algo}}
We now introduce a distributed algorithm for online constrained optimization which combines the saddle point algorithm of Arrow-Hurwicz-Uzawa given in \eqref{eqn:pd1}-\eqref{eqn:pd2} with a consensus framework for coordination.
\sml{Specifically, given
a convex function $\Fc:\mathbb{R}^m \to \mathbb{R}^m$
which satisfies the property $\Fc(\xb) \preceq \0b$ whenever $\xb \preceq \0b$,
the Lagrangian (or penalty) function $\Hc_t: \mathbb{R}^n \times \mathbb{R}^m_+ \to \mathbb{R}$ of the distributed problem \eqref{eqn:netfn}-\eqref{eqn:global_con} at time $t$ can be written as:
\begin{align}\label{eqn:Hc}
\Hc_t(\xb,\lb) = \sum_{i=1}^N f_{i,t}(\xb_i) + \frac{1}{N}\lb'\F{\sum_{\l=1}^N\gb_{\l}(\xb_{\l})}.
\end{align}
Then, the gradients of the Lagrangian at $(\xb,\lb)$ with respect to $\xb_i$ and $\lb$ are respectively given as:
\begin{subequations}\label{eqn:nablaL}
\begin{align}
\nabla_{\xb_i} \Hc_t(\xb,\lb) = & \nabla f_{i,t}(\xb_i) \hspace{-0.5mm}+ \hspace{-0.5mm}\frac{1}{N}\nabla \gb_i(\xb_i)'\nabla\F{\sum_{\l=1}^N\gb_{\l}(\xb_{\l})}\lb\label{eqn:nablaLx}\\
\nabla_{\lb} \Hc_t(\xb,\lb) = & \frac{1}{N}\F{\sum_{\l=1}^N\gb_{\l}(\xb_{\l})},\label{eqn:nablaLl}
\end{align}
\end{subequations}
where $\nabla f_{i,t}(\xb_{i,t}) \in \mathbb{R}^{n_i}$ is the gradient of $f_{i,t}$ evaluated at $\xb_i$, $\nabla \gb_i(\xb_i)$ and $\nabla \F{\sum_{\l=1}^N\gb_{\l}(\xb_{\l})}$ are $m \times n_i$ and $m \times m$ Jacobian matrices
evaluated at $\xb_i$ and $\sum_{\l=1}^N\gb_{\l}(\xb_{\l})$, respectively.
Since the calculation of \eqref{eqn:nablaLx}-\eqref{eqn:nablaLl} requires the global knowledge
$\lb$ and $\sum_{\l=1}^N\gb_{\l}(\xb_{\l})$,
\eqref{eqn:nablaLx}-\eqref{eqn:nablaLl} cannot be directly used  by agent $i$. 
Therefore, we let each agent $i$ maintain local sequences $\{\lb_{i,t}\}_{t\ge 1}\subseteq \mathbb{R}_+^m$ and $\{\yb_{i,t}\}_{t\ge 1}\subseteq \mathbb{R}^m$, which respectively estimate the true network-wide dual variables and the average of the global function value, i.e.,  $\frac{1}{N}\sum_{\l=1}^N\gb_{\l}(\xb_{\l})$.
}

The proposed distributed online primal-dual algorithm is summarized in Algorithm \ref{alg:DOPD}.
In this algorithm, each agent $i \in \Vc$ maintains three local sequences $\{\xb_{i,t}\}_{t\ge 1} \subseteq X_i$, $\{\lb_{i,t}\}_{t\ge 1} \subseteq \mathbb{R}^m_+$, and $\{\yb_{i,t}\}_{t\ge 1} \subseteq \mathbb{R}^m$, 
which are initialized by an arbitrary $\xb_{i,0} \in X_i$ and $\lb_{i,0} \in \mathbb{R}^m_+$ while $\yb_{i,0}$ is initialized as $\gb_i(\xb_{i,0})$.
Then, the three iterates $\xb_{i,t}$, $\lb_{i,t}$ and $\yb_{i,t}$ are updated recursively using the update rules \eqref{eqn:algo1}-\eqref{eqn:algo5},
where $\{\a_t\}_{t\ge 1}$ is a decreasing sequence of positive step sizes,
and the vector $\sb_{i,t} \in \mathbb{R}^{n_i}$ is defined as:
\begin{align}\label{eqn:sb}
\sb_{i,t} = \nabla f_{i,t}(\xb_{i,t}) + \frac{1}{N}\nabla \gb_i(\xb_{i,t})'\nabla{\F{N\tyb_{i,t}}}\tlb_{i,t}.
\end{align}

In the following lemma, we show that the initial condition $\yb_{i,0}=\gb_i(\xb_{i,0})$ and the update rule \eqref{eqn:algo2} and \eqref{eqn:algo5} allow the sum of the iterates $\yb_{i,t}$ for $i \in \Vc$ to maintain the true global function value $\sum_{i=1}^T \gb_i(\xb_{i,t})$ at all time $t \ge 0$. (See Appendix \ref{app:rec} for the proof.)
\begin{lemma}\label{lem:rec}
Let us define $\bar{\yb}_t$ to be the average of $\yb_{i,t}$ for $i \in \Vc$, i.e, $\bar{\yb}_t \triangleq \frac{1}{N} \sum_{i=1}^N \yb_{i,t}$. Let Assumption \ref{assume:network} hold.
Then, for any $t \ge 0$, the following equalities hold:
\begin{align*}
\sum_{i=1}^N \yb_{i,t} \stackrel{(a)}{=} N\bar{\yb}_t \stackrel{(b)}{=} \sum_{i=1}^N \tyb_{i,t} \stackrel{(c)}{=} \sum_{i=1}^T \gb_i(\xb_{i,t}).
\end{align*}
\end{lemma}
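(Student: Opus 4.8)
The plan is to prove the three equalities in the displayed chain in turn: (a) is a matter of definition, (b) is a one-line consequence of the double stochasticity of $W_t$, and (c) is an induction on $t$ that exploits the ``running-sum'' structure of the $\yb$-update. Throughout, the only assumption that is actually used is Assumption \ref{assume:network}(b); the $\eta$-nondegeneracy in (a) and the $Q$-strong connectivity in (c) play no role in this particular lemma (they will matter only later, for the consensus \emph{rate}).

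Equality (a), $\sum_{i=1}^N\yb_{i,t}=N\byb_t$, is nothing but the definition $\byb_t\triangleq\frac1N\sum_{i=1}^N\yb_{i,t}$, so there is nothing to do. For equality (b) I would write the consensus step \eqref{eqn:algo2} in the form $\tyb_{i,t}=\sum_{j=1}^N[W_t]_{ij}\yb_{j,t}$ (legitimate because $[W_t]_{ij}=0$ for $j\notin\Nc_{i,t}$), sum over $i\in\Vc$, and interchange the order of summation:
\[
\sum_{i=1}^N\tyb_{i,t}=\sum_{j=1}^N\Bigl(\sum_{i=1}^N[W_t]_{ij}\Bigr)\yb_{j,t}.
\]
By Assumption \ref{assume:network}(b) the inner column sum equals $1$ for every $j$, so $\sum_{i=1}^N\tyb_{i,t}=\sum_{j=1}^N\yb_{j,t}=N\byb_t$, which is (b). (Only column-stochasticity of $W_t$ is needed here.)

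For equality (c) I would prove by induction on $t\ge0$ that $\sum_{i=1}^N\yb_{i,t}=\sum_{i=1}^N\gb_i(\xb_{i,t})$, after which (c) follows by combining this identity with (a) and (b). The base case $t=0$ is exactly the prescribed initialization $\yb_{i,0}=\gb_i(\xb_{i,0})$ summed over $i\in\Vc$. For the inductive step, assume the identity holds at time $t$; the update \eqref{eqn:algo5}, which I expect to have the incremental form $\yb_{i,t+1}=\tyb_{i,t}+\gb_i(\xb_{i,t+1})-\gb_i(\xb_{i,t})$, gives, upon summing over $i$,
\[
\sum_{i=1}^N\yb_{i,t+1}=\sum_{i=1}^N\tyb_{i,t}+\sum_{i=1}^N\gb_i(\xb_{i,t+1})-\sum_{i=1}^N\gb_i(\xb_{i,t}).
\]
Now use (b) to replace $\sum_{i=1}^N\tyb_{i,t}$ by $\sum_{i=1}^N\yb_{i,t}$, and the induction hypothesis to replace $\sum_{i=1}^N\yb_{i,t}$ by $\sum_{i=1}^N\gb_i(\xb_{i,t})$; the latter cancels the last term, leaving $\sum_{i=1}^N\yb_{i,t+1}=\sum_{i=1}^N\gb_i(\xb_{i,t+1})$ and closing the induction. (The statement's rightmost sum $\sum_{i=1}^T$ should of course read $\sum_{i=1}^N$, i.e.\ a sum over agents.)

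There is no genuinely difficult step here: the argument is pure bookkeeping, and the ``hard part'' is only organizational. The two points that require care are (i) rewriting the neighbor-weighted average in \eqref{eqn:algo2} as a full sum over $j\in\Vc$ so that column-stochasticity can be applied cleanly, and (ii) lining up the time indices in \eqref{eqn:algo2} and \eqref{eqn:algo5} so that the increment $\gb_i(\xb_{i,t+1})-\gb_i(\xb_{i,t})$ is \emph{exactly} the telescoping term that the induction hypothesis at time $t$ cancels — any off-by-one in how $\tyb_{i,t}$, $\xb_{i,t}$ and $\xb_{i,t+1}$ are coupled in the algorithm would break the invariant.
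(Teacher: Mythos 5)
Your proposal is correct and follows essentially the same route as the paper's own proof in Appendix~\ref{app:rec}: (a) by definition, (b) by interchanging sums and invoking the column-stochasticity of $W_t$, and (c) by induction on $t$ using the update \eqref{eqn:algo5} together with (b). Your observation that the rightmost sum should read $\sum_{i=1}^N$ rather than $\sum_{i=1}^T$ is also correct; it is a typo in the statement.
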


Therefore, either $\yb_{i,t}$ or $\tyb_{i,t}$ is a local estimate on the average $\frac{1}{N}\sum_{i=1}^T \gb_i(\xb_{i,t})$ that is available at agent $i$.
Also, in view of this lemma and equation \eqref{eqn:nablaL}-\eqref{eqn:sb}, it is obvious that
the steps in (\ref{eqn:algo3})-(\ref{eqn:algo4}) become an approximation of the centralized online primal-dual algorithm update (\ref{eqn:pd1})-(\ref{eqn:pd2}).
This will become more apparent when we bound the network wide disagreement $\|\tlb_{i,t}-\blb_t\|$ and $\|\tyb_{i,t}-\byb_t\|$ later in Lemma \ref{lem:disagree}, where $\blb_t$  and $\byb_t$ denote the average of $\lb_{i,t}$ and $\yb_{i,t}$ for $i \in \Vc$, respectively, i.e., $\blb_t = \frac{1}{N}\sum_{i=1}^N \lb_{i,t}$ and $\byb_t = \frac{1}{N}\sum_{i=1}^N \yb_{i,t}$.

Note that the function $\Fc:\mathbb{R}^m \to \mathbb{R}^m$ can be any arbitrary function as long as
it satisfies Assumption \ref{assume:Fc} in Section \ref{sec:conv}.
Examples include i) $\Fc(\xb) = \xb$, for which the dual update rule in (\ref{eqn:algo4}) of this algorithm becomes exactly same as that of Arrow-Hurwicz-Uzawa \cite{AHU-PD},
or ii) a smooth surrogate of the max function $[\xb]_+$, for which the algorithm becomes a variant of the penalty methods (see e.g., \cite{Bertsekas-NP}).

\begin{remark}
\sml{To the best of our knowledge, there has not been any work considering the primal-dual algorithm even for centralized online problems.
Although the focus of this paper is on online optimization problems defined over networks, it is worth mentioning that the online primal-dual algorithm in (\ref{eqn:algo1})-(\ref{eqn:algo5}) works for centralized cases as well.
Indeed, a centralized online optimization can be seen as a special case with $N = 1$. $\blacksquare$}
\end{remark}

\begin{algorithm}[t]\caption{Distributed Online Primal-Dual}\label{alg:DOPD}
\begin{algorithmic}[1]
\REQUIRE Set $T \ge 1$ and $t = 1$. Initialize locally $\xb_{i,0} \in X_i$, $\lb_{i,0} \in \mathbb{R}^m_+$ and $\yb_{i,0} = \gb_i(\xb_{i,0})$ for every $i \in \Vc$.
\STATE If $t = T$, then stop. Otherwise, set for every $i \in \Vc$:
\begin{subequations}
\begin{align}
\tlb_{i,t} = &~ \sum_{j=1}^N[W_t]_{ij} \lb_{j,t}\label{eqn:algo1}\\
\tyb_{i,t} = &~ \sum_{j=1}^N[W_t]_{ij} \yb_{j,t}\label{eqn:algo2}\\
\xb_{i,t+1} = &~ P_{X_i}\left[\xb_{i,t} - \a_t \sb_{i,t}\right]\label{eqn:algo3}\\
\lb_{i,t+1} = &~ \left[\tlb_{i,t} + \frac{\a_t}{N} \F{N\tyb_{i,t}}\right]_+\label{eqn:algo4}\\
\yb_{i,t+1} = &~ \tyb_{i,t} + \gb_i(\xb_{i,t+1}) -\gb_i(\xb_{i,t}),\label{eqn:algo5}
\end{align}
\end{subequations}
\STATE Increase $t$ by one and return to Step 1.
\end{algorithmic}
\end{algorithm}

\section{Regret Analysis\label{sec:conv}}
We now provide a regret analysis of the proposed distributed online primal-dual method.
In Section \ref{ssec:assume}, we present additional assumptions on the problem 
described in Section \ref{sec:prob}.
In Sections \ref{ssec:iter} and \ref{ssec:disagree}, we present two key lemmas that will be used in the proof of the main results.
The main results are provided in Section \ref{ssec:main}.

\subsection{Assumptions\label{ssec:assume}}
Our regret analysis is based on the following assumptions.
Assumptions \ref{assume:L} and \ref{assume:Fc} are typical in any gradient-based algorithms. They ensure the boundedness of the gradients
so that the gradient based method in Algorithm \ref{alg:DOPD} is well behaved.
\begin{assumption}\label{assume:L}
\begin{itemize}
\item[(a)] The functions $\{f_{i,t}\}_{i\in \Vc, t \ge 1}$ and $\{\gb_i\}_{i \in \Vc}$ are convex and continuously differentiable.\footnote{Every result in this paper holds even if $f_{i,t}$ and $\gb_i$ are nondifferentiable; we mainly stick to this case for simplicity.}
\item[(b)] The sets $X_i$ for $i \in \Vc$ are compact. That is, there exists a constant $C_{\xb}$ such that
\begin{align*}
\|\xb\| \le &~ C_{\xb} \text{ for all } \xb \in X_i \text{ and } i \in \Vc.
\end{align*}
\item[(c)] The iterates $\{\lb_{i,t}\}_{i\in \Vc, t \ge 1}$ are bounded, i.e., there exists a constant $C_{\lb}$ such that for all $t\ge 1$ and $i \in \Vc$
    \begin{align*}
    \|\lb_{i,t}\| \le C_{\lb}.
    \end{align*}
\end{itemize}
\end{assumption}

Direct consequences of Assumption \ref{assume:L}(a) and \ref{assume:L}(b) are gradient boundedness and Lipschitz continuity of the functions $\{f_{i,t}\}_{i \in \Vc, t \ge 1}$, i.e., there exists a constant $L_f >0$ such that for all $i \in \Vc$
\begin{align}\label{eqn:L_f1}
\|\nabla f_{i,t}(\xb)\| \le L_f, \text{ for all } \xb \in X_i,
\end{align}
\begin{align}\label{eqn:L_f2}
|f_{i,t}(\xb)-f_{i,t}(\yb)| \le L_f\|\xb-\yb\|, \text{ for all } \xb,\yb \in X_i.
\end{align}
Similarly, for the constraint functions $\{\gb_i\}_{i\in \Vc}$, there exists a constant $L_{\gb} > 0$ such that for all $i \in \Vc$
\begin{align}\label{eqn:L_g1}
\|\nabla \gb_i(\xb)\| \le L_{\gb}, \text{ for all } \xb \in X_i,
\end{align}
\begin{align}\label{eqn:L_g2}
\|\gb_i(\xb)-\gb_i(\yb)\| \le L_{\gb}\|\xb-\yb\|, \text{ for all } \xb,\yb \in X_i.
\end{align}
Furthermore, the compactness of $X_i$'s and the continuity of the functions $\{f_{i,t}\}_{i\in\Vc,t\ge 1}$ and $\{\gb_i\}_{i\in \Vc}$ imply that there exist constants $C_f, C_{\gb} >0$ such that for all $i \in \Vc$
\begin{align}\label{eqn:C_f}
|f_{i,t}(\xb)| \le C_f, \text{ for all } \xb \in X_i,
\end{align}
\begin{align}\label{eqn:C_g}
\|\gb_i(\xb)\| \le C_{\gb}, \text{ for all } \xb \in X_i.
\end{align}
\sml{
Note that Assumptions \ref{assume:L}(a)-(c) ensure the boundedness of the gradient of the Lagrangian (see Eq. \eqref{eqn:nablaLx}-\eqref{eqn:nablaLl} and also \eqref{eqn:sb}). Such an assumption is typical in any gradient based method, which is also seen in the Arrow-Hurwicz-Uzawa's original paper \cite{AHU-PD}.
}

\begin{assumption}\label{assume:Fc}
The function $\Fc$ is defined, convex and continuously differentiable over $\mathbb{R}^m$. We further assume $\Fc(\xb) \preceq \0b$ if $\xb \preceq \0b$.
 Also, it has bounded and Lipschitz continuous gradients, i.e., there exists some constants $G_{\Fc} >0$ and $L_{\Fc} > 0$ such that
\begin{align*}
&\|\nabla \Fc(\xb)-\nabla \Fc(\yb)\| \le G_{\Fc} \|\xb-\yb\|, \text{ for all } \xb,\yb \in \mathbb{R}^m,\\
&\|\nabla \Fc(\xb)\| \le L_{\Fc}, \text{ for all } \xb \in \mathbb{R}^m.
\end{align*}
\end{assumption}

A direct consequence of Assumption \ref{assume:Fc} is the Lipschitz continuity of the function $\Fc$, i.e., there exists a constant $L_{\Fc}>0$ such that
\begin{align}\label{eqn:L_Fc}
\|\Fc(\xb)-\Fc(\yb)\| \le L_{\Fc}\|\xb-\yb\|, \text{ for all } \xb, \yb \in \mathbb{R}^m.
\end{align}


\subsection{Sublinear Network Disagreement\label{ssec:disagree}}

We will employ a result from \cite{Ram2012}.
This result shows that a set of sequences recursively updated by weighted averaging followed by perturbation
will not drift too much from its average sequence as long as the underlying weight matrices satisfy the properties listed in Assumption \ref{assume:network} and the perturbation behaves nicely.
\begin{lemma}\label{lem:disagreegen}
Let Assumption \ref{assume:network} hold for a sequence of weight matrices $\{W_t\}_{t\ge 1}$. Consider a set of sequences $\{\tb_{i,t}\}$ for $i \in [N]$ defined by the following relation:
\begin{align}\label{eqn:theta_dyn}
\tb_{i,t+1} = \sum_{j=1}^N [W_t]_{ij} \tb_{j,t} + \eb_{i,t+1}, \text{ for } t \ge 1.
\end{align}
Let $\btb_t$ denote the average of $\tb_{i,t}$ for $i \in [N]$, i.e., $\btb_t = \frac{1}{N} \sum_{i=1}^N \tb_{i,t}$.
Then,
\begin{align}\label{eqn:theta}
\|&\tb_{i,t+1}-\btb_{t+1}\| \le N \g \b^t\max_j\|\tb_{j,1}\| \\
&~+ \g \sum_{\ell=1}^{t-1}\b^{t-\ell}\sum_{j=1}^N \|\eb_{j,\ell+1}\|
+ \frac{1}{N}\sum_{j=1}^N \|\eb_{j,t+1}\| + \|\eb_{i,t+1}\|,\nonumber
\end{align}
where $\g$ and $\b$ are defined as
\begin{align}\label{eqn:gb}
\g = \left(1-\frac{\eta}{2N^2}\right)^{-2} \quad \b = \left(1-\frac{\eta}{2N^2}\right)^{\frac{1}{Q}}.
\end{align}
\end{lemma}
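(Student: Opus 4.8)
The plan is to unroll the linear recursion \eqref{eqn:theta_dyn} in terms of the transition matrices $\Phi(t,\ell) = W_t W_{t-1} \cdots W_\ell$ and then exploit the known geometric mixing rate of such products under Assumption \ref{assume:network}. Writing out the recursion gives, for each $i$,
\[
\tb_{i,t+1} = \sum_{j=1}^N [\Phi(t,1)]_{ij}\, \tb_{j,1} + \sum_{\ell=1}^{t-1} \sum_{j=1}^N [\Phi(t,\ell+1)]_{ij}\, \eb_{j,\ell+1} + \eb_{i,t+1},
\]
and averaging this over $i$ (using that each $W_t$ is doubly stochastic, so $\frac{1}{N}\sum_i [\Phi(t,\ell)]_{ij} = \frac{1}{N}$) gives
\[
\btb_{t+1} = \frac{1}{N}\sum_{j=1}^N \tb_{j,1} + \sum_{\ell=1}^{t-1} \frac{1}{N}\sum_{j=1}^N \eb_{j,\ell+1} + \frac{1}{N}\sum_{j=1}^N \eb_{j,t+1}.
\]
Subtracting the two and applying the triangle inequality isolates three groups of terms: an initial-condition term weighted by $\big|[\Phi(t,1)]_{ij} - \frac{1}{N}\big|$, the history of perturbations $\eb_{j,\ell+1}$ for $\ell \le t-1$ weighted by $\big|[\Phi(t,\ell+1)]_{ij} - \frac{1}{N}\big|$, and the most recent perturbation $\eb_{i,t+1}$ and its average, which are simply carried over unweighted (these account for the last two terms $\frac{1}{N}\sum_j \|\eb_{j,t+1}\| + \|\eb_{i,t+1}\|$ in \eqref{eqn:theta}).

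The crux is the bound on the deviation of the transition matrix entries from the uniform value: I would invoke the standard estimate (this is exactly the result cited from \cite{Ram2012}, ultimately going back to convergence-rate analyses of products of doubly stochastic matrices) that under Assumption \ref{assume:network}(a)--(c),
\[
\left| [\Phi(t,\ell)]_{ij} - \frac{1}{N} \right| \le \g\, \b^{\,t-\ell}
\]
for all $i,j$ and all $t \ge \ell$, where $\g$ and $\b$ are as in \eqref{eqn:gb}. The exponent structure of $\b$ reflects the $Q$-strong connectivity: connectivity is only guaranteed over windows of length $Q$, so the per-step contraction factor is the $Q$-th root of the per-window factor $1-\frac{\eta}{2N^2}$, and the $N^2$ in the denominator together with $\eta$ tracks how a single strongly connected window forces a uniform lower bound on the relevant entries. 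Plugging this bound into the initial-condition term yields $\sum_j \g\b^{t}\|\tb_{j,1}\| \le N\g\b^t \max_j\|\tb_{j,1}\|$, and into the perturbation-history term yields $\sum_{\ell=1}^{t-1}\g\b^{t-\ell}\sum_j\|\eb_{j,\ell+1}\|$, which together with the two unweighted recent terms gives precisely \eqref{eqn:theta}.

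The main obstacle is the geometric mixing estimate itself — establishing $|[\Phi(t,\ell)]_{ij} - 1/N| \le \g\b^{t-\ell}$ is the real content, and since it is imported verbatim from \cite{Ram2012} under the matching hypotheses in Assumption \ref{assume:network}, the remaining work here is the bookkeeping: correctly separating the time-$t+1$ perturbation (which has not yet been mixed and so appears with coefficient one rather than a contracted coefficient) from the earlier ones, and verifying that the double-stochasticity of $W_t$ makes the averaged recursion collapse to the clean form above so that the $1/N$ terms cancel exactly against the uniform part of $\Phi$. None of that is difficult, but getting the index ranges on the sum over $\ell$ right (running from $1$ to $t-1$, with the $\ell = t$ contribution handled separately) is where a careless argument would slip.
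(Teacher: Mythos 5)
Your proposal is correct and follows exactly the standard argument behind this result; note that the paper itself offers no proof of this lemma at all, importing it verbatim from \cite{Ram2012}, so your unrolling of the recursion via the transition matrices $\Phi(t,\ell)=W_t\cdots W_\ell$ together with the geometric bound on $\bigl|[\Phi(t,\ell)]_{ij}-\tfrac{1}{N}\bigr|$ is precisely the proof of the cited source. The only quibble is a harmless off-by-one in the exponent of $\b$ (your stated mixing bound gives $\b^{t-1}$ for the initial-condition term and $\b^{t-\ell-1}$ for the perturbation history, whereas the lemma as written has $\b^{t}$ and $\b^{t-\ell}$); this is a matter of how the mixing estimate indexes the product and can be absorbed into $\g$, so it does not affect the validity of the argument.
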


Note that the iterates $\lb_{i,t}$ and $\yb_{i,t}$ in algorithm \eqref{eqn:algo4}-\eqref{eqn:algo5} can be represented by relation (\ref{eqn:theta_dyn}). Indeed, (\ref{eqn:algo4}) can be rewritten as
\begin{align*}
\lb_{i,t+1} = \sum_{j=1}^N [W_t]_{ij} \lb_{j,t} + \eb_{i,t+1},
\end{align*}
with
\begin{align}\label{eqn:ebx}
\eb_{i,t+1} = \left[\tlb_{i,t} + \frac{\a_t}{N}\F{N\tyb_{i,t}}\right]_+ - \tlb_{i,t}.
\end{align}
Similarly, 
(\ref{eqn:algo5}) can be rewritten as
\begin{align*}
\yb_{i,t+1} = \sum_{j=1}^N [W_t]_{ij} \yb_{j,t} + \eb_{i,t+1},
\end{align*}
with
\begin{align}\label{eqn:ebl}
\eb_{i,t+1} = &~\gb_i(\xb_{i,t+1}) - \gb_i(\xb_{i,t}).
\end{align}

\sml{Using equation \eqref{eqn:ebl} and Lemma \ref{lem:disagreegen}, we first show the boundedness of the iterates $\yb_{i,t}$
(see Appendix \ref{app:bnd} for the proof).
\begin{lemma}\label{lem:bnd}
Let Assumptions \ref{assume:network}-\ref{assume:L} hold.
Then, the iterates $\{\yb_{i,t}\}_{i\in \Vc, t \ge 1}$ are bounded, i.e., there exists a constant $C_{\yb}$ such that for all $t\ge 1$ and $i \in \Vc$
    \begin{align*}
    \|\yb_{i,t}\| \le C_{\yb},
    \end{align*}
where
\begin{align*}
C_{\yb} = &~\max\bigg\{ \|\yb_{i,1}-\byb_1\|,N \g \max_j\|\yb_{j,1}\| \\
&~+ \frac{2\g\b}{1-\b} L_{\gb}C_{\xb}
+ 4L_{\gb}C_{\xb}\bigg\} + C_{\gb}.
\end{align*}
\end{lemma}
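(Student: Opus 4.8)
The plan is to bound $\|\yb_{i,t}\|$ by splitting it as $\|\yb_{i,t}\| \le \|\yb_{i,t} - \byb_t\| + \|\byb_t\|$ and controlling each piece separately. For the average term, Lemma \ref{lem:rec}(a)--(c) gives $\byb_t = \frac{1}{N}\sum_{i=1}^N \gb_i(\xb_{i,t})$, so by the bound \eqref{eqn:C_g} we immediately get $\|\byb_t\| \le \frac{1}{N}\sum_{i=1}^N \|\gb_i(\xb_{i,t})\| \le C_{\gb}$. This is the ``$+C_{\gb}$'' appearing in the statement, and it is the easy half.

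For the disagreement term $\|\yb_{i,t} - \byb_t\|$, I would invoke Lemma \ref{lem:disagreegen} applied to the $\yb$-iterates, which (as the excerpt already notes) obey the recursion \eqref{eqn:theta_dyn} with perturbation $\eb_{i,t+1} = \gb_i(\xb_{i,t+1}) - \gb_i(\xb_{i,t})$. The key estimate is that these perturbations are uniformly bounded: by the Lipschitz property \eqref{eqn:L_g2} and compactness \eqref{eqn:L_f1}-type bound on $X_i$ (Assumption \ref{assume:L}(b), $\|\xb\|\le C_{\xb}$), we have $\|\eb_{i,t+1}\| \le L_{\gb}\|\xb_{i,t+1} - \xb_{i,t}\| \le 2L_{\gb}C_{\xb}$ for all $i$ and $t$. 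Plugging this bound into \eqref{eqn:theta}: the first term gives $N\g\b^{t-1}\max_j\|\yb_{j,1}\| \le N\g\max_j\|\yb_{j,1}\|$ since $\b<1$; the geometric sum $\g\sum_{\ell=1}^{t-1}\b^{t-\ell}\sum_j\|\eb_{j,\ell+1}\| \le \g \cdot \frac{\b}{1-\b} \cdot N \cdot 2L_{\gb}C_{\xb}$ (using $\sum_{\ell=1}^{t-1}\b^{t-\ell} \le \frac{\b}{1-\b}$ and there being $N$ agents); and the last two terms $\frac{1}{N}\sum_j\|\eb_{j,t+1}\| + \|\eb_{i,t+1}\| \le 2L_{\gb}C_{\xb} + 2L_{\gb}C_{\xb} = 4L_{\gb}C_{\xb}$. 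Wait --- this needs care with the $N$ factor in the geometric term; reconciling with the stated constant $\frac{2\g\b}{1-\b}L_{\gb}C_{\xb}$ suggests the intended reading of \eqref{eqn:theta} places the $\sum_j$ differently or the $\frac1N$ normalization is elsewhere, so I would recheck the exact form of Lemma \ref{lem:disagreegen} as it applies here. Either way, combining these gives $\|\yb_{i,t}-\byb_t\| \le N\g\max_j\|\yb_{j,1}\| + \frac{2\g\b}{1-\b}L_{\gb}C_{\xb} + 4L_{\gb}C_{\xb}$ for $t \ge 2$, while for $t=1$ the bound $\|\yb_{i,1}-\byb_1\|$ is taken directly. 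Taking the max over these two cases yields exactly the bracketed quantity in the definition of $C_{\yb}$, and adding the $C_{\gb}$ from the average completes the bound.

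The main obstacle is bookkeeping rather than conceptual: one must track the $N$ factors and geometric-series constants in \eqref{eqn:theta} precisely so that the result matches the exact constant $C_{\yb}$ stated, and one must be slightly careful that Lemma \ref{lem:disagreegen} as written is indexed from $t\ge 1$ so the bound \eqref{eqn:theta} applies for $t+1 \ge 2$, handling $t=1$ as a separate base case. There is also a subtle circularity to avoid: the perturbation bound $\|\eb_{i,t+1}\| \le 2L_{\gb}C_{\xb}$ relies only on $\xb_{i,t}\in X_i$ (guaranteed since \eqref{eqn:algo3} projects onto $X_i$), not on any prior bound on $\yb$, so the argument is not circular --- the $\yb$-boundedness is a clean consequence of the $\xb$-boundedness and the consensus-contraction estimate, with no need for Assumption \ref{assume:L}(c) on $\lb$.
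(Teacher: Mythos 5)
Your proof follows essentially the same route as the paper's: the same triangle-inequality split $\|\yb_{i,t}\| \le \|\yb_{i,t}-\byb_t\| + \|\byb_t\|$, the same $C_{\gb}$ bound on the average via Lemma \ref{lem:rec}, and the same application of Lemma \ref{lem:disagreegen} with the perturbation bound $\|\eb_{i,t+1}\| \le 2L_{\gb}C_{\xb}$, with $t=1$ handled by the max. Your concern about the $N$ factor is well founded: a literal application of \eqref{eqn:theta} gives $\g\sum_{\ell=1}^{t-1}\b^{t-\ell}\sum_{j=1}^{N}\|\eb_{j,\ell+1}\| \le \frac{2N\g\b}{1-\b}L_{\gb}C_{\xb}$, so the paper's stated constant $\frac{2\g\b}{1-\b}L_{\gb}C_{\xb}$ appears to drop a factor of $N$ in that term, though this affects only the explicit value of $C_{\yb}$ and not the boundedness conclusion.
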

}

From step (\ref{eqn:algo2}), Assumption \ref{assume:Fc}, and Lemma \ref{lem:bnd}, we know that there exists a constant $C_{\Fc}$ such that for all $i \in \Vc$ and $t \ge 1$
\begin{align}\label{eqn:C_Fc}
\|\Fc(N\tyb_{i,t})\| \le C_{\Fc}.
\end{align}

The following corollary shows that if the perturbation can be bounded and controlled by a decreasing step size,
the discrepancy of each sequence from the average sequence summed over an arbitrary time horizon
can be sublinearly bounded (see Appendix \ref{app:sublinear} for the proof).
\begin{corollary}\label{cor:sublinear}
Suppose that for the sequences $\{\tb_{i,t}\}$ defined in Lemma \ref{lem:disagreegen} there exist a positive scalar sequence $\{\a_t\}_{t\ge 1}$ and a constant $K >0$ such that  $\|\eb_{i,t}\| \le K\a_t$ for all $i\in [N]$ and $t \ge 1$. Then, for any $T \ge 2$:
\begin{align*}
\sum_{t=1}^{T-1}\|\tb_{i,t+1}-\btb_{t+1}\| \le &~\frac{\g N\b}{1-\b}\max_j\|\tb_{j,1}\| \\
&~+ \left(\frac{\g NK\b}{1-\b}+ 2K\right)\sum_{t=1}^{T-1}\a_t,\nonumber
\end{align*}
where $\g$ and $\b$ are defined in (\ref{eqn:gb}).
\end{corollary}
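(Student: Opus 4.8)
The plan is to feed the bound \eqref{eqn:theta} from Lemma \ref{lem:disagreegen} into the hypothesis $\|\eb_{i,t}\| \le K\a_t$ and sum over $t = 1,\dots,T-1$, handling each of the four terms on the right-hand side of \eqref{eqn:theta} separately. First I would write
\[
\sum_{t=1}^{T-1}\|\tb_{i,t+1}-\btb_{t+1}\|
\le \underbrace{\sum_{t=1}^{T-1} N\g\b^t \max_j\|\tb_{j,1}\|}_{(I)}
+ \underbrace{\g\sum_{t=1}^{T-1}\sum_{\ell=1}^{t-1}\b^{t-\ell}\sum_{j=1}^N\|\eb_{j,\ell+1}\|}_{(II)}
+ \underbrace{\sum_{t=1}^{T-1}\frac{1}{N}\sum_{j=1}^N\|\eb_{j,t+1}\|}_{(III)}
+ \underbrace{\sum_{t=1}^{T-1}\|\eb_{i,t+1}\|}_{(IV)}.
\]
For term $(I)$, since $\b \in (0,1)$, the geometric series $\sum_{t=1}^{T-1}\b^t \le \sum_{t=1}^{\infty}\b^t = \frac{\b}{1-\b}$, which yields the $\frac{\g N\b}{1-\b}\max_j\|\tb_{j,1}\|$ term in the claim.

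For terms $(III)$ and $(IV)$, I would apply the hypothesis $\|\eb_{j,\ell+1}\| \le K\a_{\ell+1} \le K\a_\ell$ (using that $\{\a_t\}$ is decreasing) to bound each by at most $K\sum_{t=1}^{T-1}\a_t$; together they contribute $2K\sum_{t=1}^{T-1}\a_t$. The only slightly delicate step is term $(II)$: after substituting $\|\eb_{j,\ell+1}\| \le K\a_\ell$ and summing over $j$ to get a factor $N$, I would swap the order of summation over $t$ and $\ell$ to obtain
\[
(II) \le \g N K \sum_{\ell=1}^{T-2}\a_\ell \sum_{t=\ell+1}^{T-1}\b^{t-\ell}
\le \g N K \sum_{\ell=1}^{T-2}\a_\ell \cdot \frac{\b}{1-\b}
\le \frac{\g N K \b}{1-\b}\sum_{t=1}^{T-1}\a_t,
\]
where the inner sum $\sum_{t=\ell+1}^{T-1}\b^{t-\ell} \le \sum_{k=1}^{\infty}\b^k = \frac{\b}{1-\b}$ is again a geometric bound independent of $\ell$. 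Collecting the contribution $\frac{\g N K \b}{1-\b}\sum_t \a_t$ from $(II)$ with the $2K\sum_t\a_t$ from $(III)+(IV)$ gives the coefficient $\big(\frac{\g N K \b}{1-\b} + 2K\big)$ in the statement, completing the argument.

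The main obstacle, such as it is, is purely bookkeeping in term $(II)$: one must be careful to interchange the double sum correctly so that the $\ell$-sum ranges over the step sizes and the $t$-sum produces a geometric factor uniformly bounded in $\ell$, and to make sure the monotonicity of $\{\a_t\}$ is invoked to replace $\a_{\ell+1}$ by $\a_\ell$ so that the final sum is genuinely $\sum_{t=1}^{T-1}\a_t$ rather than a shifted version. No real analytic difficulty arises — everything reduces to summing geometric series and one index swap.
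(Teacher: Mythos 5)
Your proof is correct and follows essentially the same route as the paper's: sum the bound of Lemma \ref{lem:disagreegen} over $t$, control the initial-condition term by a geometric series, and interchange the double sum in the perturbation term before applying $\sum_k \b^k \le \b/(1-\b)$. The only slight difference is that you explicitly invoke monotonicity of $\{\a_t\}$ to shift indices from $\a_{\ell+1}$ to $\a_\ell$, a step the paper glosses over but also implicitly needs (it uses decreasing step sizes throughout), so this is if anything a more careful bookkeeping of the same argument.
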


We now make use of Corollary \ref{cor:sublinear} in the following lemma to bound the disagreement between each agent $i$'s iterates $\{\lb_{i,t}\}_{t\ge 1}$ and $\{\yb_{i,t}\}_{t\ge 1}$ and their corresponding average sequences
(see Appendix \ref{app:disagree} for the proof).

\begin{lemma}\label{lem:disagree}
Let Assumptions \ref{assume:network} - \ref{assume:L} hold.
Consider sequences $\{\tlb_{i,t}\}_{i\in\Vc,t\ge 1}$ and $\{\tyb_{i,t}\}_{i\in\Vc,t\ge 1}$ generated by the algorithm in (\ref{eqn:algo1})-(\ref{eqn:algo5}).
Let $\blb_t$ and $\byb_t$ denote the average of $\lb_{i,t}$ and $\yb_{i,t}$ for $i \in \Vc$, i.e., $\blb_t = \frac{1}{N} \sum_{i=1}^N \lb_{i,t}$ and $\byb_t = \frac{1}{N} \sum_{i=1}^N \yb_{i,t}$.
Then, with a stepsize choice of
\[
\a_t = \frac{1}{\sqrt{t}},
\]
the network disagreement terms can be upper bounded as follows:
For all $i \in \Vc$ and $T \ge 1$, we have
\sml{
\begin{align*}
\sum_{t=1}^T\sum_{i=1}^N \|\tlb_{i,t}-\blb_t\| \le &~ B_1(N) + B_2(N)\sqrt{T},\\
\sum_{t=1}^T\sum_{i=1}^N  \|\tyb_{i,t}-\byb_t\| \le &~ B_3(N) + B_4(N) \sqrt{T},
\end{align*}
}where 
the constants $B_1(N)$, $B_2(N)$, $B_3(N)$ and $B_4(N)$ are defined in Appendix \ref{app:nomen}.
\end{lemma}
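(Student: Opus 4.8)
\textbf{Proof proposal for Lemma \ref{lem:disagree}.}

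The plan is to invoke Corollary \ref{cor:sublinear} separately for each of the two sequences $\{\tlb_{i,t}\}$ and $\{\tyb_{i,t}\}$, so the main work is to verify its hypothesis: that the perturbation terms are controlled by a constant multiple of the step size $\a_t = 1/\sqrt t$. First I would recall the two identifications already exhibited in the excerpt: the dual update \eqref{eqn:algo4} has perturbation $\eb_{i,t+1} = \bigl[\tlb_{i,t} + \tfrac{\a_t}{N}\F{N\tyb_{i,t}}\bigr]_+ - \tlb_{i,t}$ as in \eqref{eqn:ebx}, and the $\yb$-update \eqref{eqn:algo5} has perturbation $\eb_{i,t+1} = \gb_i(\xb_{i,t+1}) - \gb_i(\xb_{i,t})$ as in \eqref{eqn:ebl}. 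For the dual perturbation, since the projection $[\cdot]_+$ onto the nonnegative orthant is nonexpansive and fixes $\tlb_{i,t}$ (which already lies in $\mathbb{R}^m_+$ by \eqref{eqn:algo1} and the fact that $\lb_{j,t}\in\mathbb{R}^m_+$ with $W_t$ nonnegative), we get $\|\eb_{i,t+1}\| \le \tfrac{\a_t}{N}\|\F{N\tyb_{i,t}}\| \le \tfrac{C_{\Fc}}{N}\a_t$ using the bound \eqref{eqn:C_Fc}. So Corollary \ref{cor:sublinear} applies with $K = C_{\Fc}/N$ for the $\lb$ sequences.

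For the $\yb$ perturbation, I would bound $\|\eb_{i,t+1}\| = \|\gb_i(\xb_{i,t+1}) - \gb_i(\xb_{i,t})\| \le L_{\gb}\|\xb_{i,t+1} - \xb_{i,t}\|$ by the Lipschitz property \eqref{eqn:L_g2}, and then estimate $\|\xb_{i,t+1}-\xb_{i,t}\|$ from the primal update \eqref{eqn:algo3}. Since $\xb_{i,t+1} = P_{X_i}[\xb_{i,t} - \a_t\sb_{i,t}]$ and $\xb_{i,t} \in X_i = P_{X_i}[\xb_{i,t}]$, nonexpansiveness of $P_{X_i}$ gives $\|\xb_{i,t+1}-\xb_{i,t}\| \le \a_t\|\sb_{i,t}\|$; the term $\|\sb_{i,t}\|$ is uniformly bounded by Assumption \ref{assume:L} together with \eqref{eqn:L_f1}, \eqref{eqn:L_g1}, Assumption \ref{assume:Fc} (the $L_{\Fc}$ bound on $\nabla\Fc$), and the bound $\|\tlb_{i,t}\|\le C_{\lb}$ inherited from Assumption \ref{assume:L}(c) via double stochasticity of $W_t$ — call the resulting constant $L_s$, so $\|\eb_{i,t+1}\| \le L_{\gb}L_s\,\a_t$. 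Thus Corollary \ref{cor:sublinear} applies again with $K = L_{\gb}L_s$ for the $\yb$ sequences.

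With both hypotheses verified, Corollary \ref{cor:sublinear} yields, for each $i$, bounds of the form $\sum_{t=1}^{T-1}\|\tlb_{i,t+1}-\blb_{t+1}\| \le c_1 + c_2\sum_{t=1}^{T-1}\a_t$ and similarly for $\yb$. I would then use the elementary estimate $\sum_{t=1}^{T-1}\a_t = \sum_{t=1}^{T-1} t^{-1/2} \le 2\sqrt T$, add in the $t=1$ term $\|\tlb_{i,1}-\blb_1\|$ (which is a constant, bounded using $C_{\lb}$), and sum over the $N$ agents. Matching constants — the $\g$, $\b$, $\eta$, $N$ dependence from \eqref{eqn:gb}, the $\max_j\|\tb_{j,1}\|$ terms, and the constants $C_{\Fc}, C_{\gb}, C_{\lb}, L_{\gb}, L_{\Fc}, C_{\xb}, L_f$ — gives the claimed $B_1(N) + B_2(N)\sqrt T$ and $B_3(N) + B_4(N)\sqrt T$, with the explicit forms of $B_1,\dots,B_4$ recorded in Appendix \ref{app:nomen}. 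The only genuinely delicate point is the uniform bound on $\|\sb_{i,t}\|$: it relies on chaining Assumption \ref{assume:L}(c) through the averaging step \eqref{eqn:algo1}, the boundedness of $\tyb_{i,t}$ from Lemma \ref{lem:bnd}, and the gradient bounds on $f_{i,t}$, $\gb_i$, and $\Fc$; everything else is routine bookkeeping of constants.
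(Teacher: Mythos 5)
Your proposal follows essentially the same route as the paper's proof: the same perturbation bounds $\|\eb_{i,t+1}\|\le \frac{C_{\Fc}}{N}\a_t$ for the dual iterates and $\|\eb_{i,t+1}\|\le L_{\gb}\bigl(L_f+\frac{1}{N}L_{\gb}L_{\Fc}C_{\lb}\bigr)\a_t$ for the $\yb$ iterates, the same invocation of Corollary \ref{cor:sublinear} with those constants, and the same estimate $\sum_{t=1}^{T}t^{-1/2}\le 2\sqrt{T}$. The only step you elide is that Corollary \ref{cor:sublinear} controls the un-tilded quantities $\|\lb_{i,t}-\blb_t\|$ and $\|\yb_{i,t}-\byb_t\|$ rather than the tilded ones appearing in the lemma; the paper closes this gap by first showing $\sum_{i=1}^N\|\tlb_{i,t}-\blb_t\|\le\sum_{i=1}^N\|\lb_{i,t}-\blb_t\|$ (and likewise for $\yb$) via convexity of the norm and double stochasticity of $W_t$, which is routine.
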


The bounds shown in Lemma \ref{lem:disagree} indicate that the network-wide disagreement terms indeed behave nicely,
meaning that they can be bounded in the order of $O(\sqrt{T})$ for any arbitrary $T \ge 1$.
Note that the constants $B_1(N)$, $B_2(N)$, $B_3(N)$ and $B_4(N)$
depend on the number of nodes $N$ as well as the minimum weight $\eta$ in the averaging matrix and the connectedness of the time-varying graphs $Q$.


\subsection{Basic Iterate Relations\label{ssec:iter}}
In this section, we establish some basic relations that hold for the sequences $\{\xb_{i,t}\}$, $\{\lb_{i,t}\}$ and $\{\yb_{i,t}\}$ obtained by the distributed online primal-dual algorithm in Algorithm \ref{alg:DOPD}
(see Appendix \ref{app:iter} for these proof). These relations will play an important role in our analysis of the sublinearly bounded regret in the following Section \ref{ssec:main}.

\begin{lemma}\label{lem:iter}
Let Assumptions \ref{assume:network} and \ref{assume:Fc} hold.
Consider sequences $\{\xb_{i,t}\}_{i\in\Vc,t\ge 1}$, $\{\lb_{i,t}\}_{i\in\Vc,t\ge 1}$ and $\{\yb_{i,t}\}_{i\in\Vc,t\ge 1}$ generated by the distributed online primal-dual algorithm in (\ref{eqn:algo1})-(\ref{eqn:algo5}). Let $\xb_t = [\xb_{1,t}'~\ldots~\xb_{N,t}']'$ and  $\blb_t$ denote the average of $\lb_{i,t}$ for $i \in \Vc$, i.e., $\blb_t = \frac{1}{N}\sum_{i=1}^N \lb_{i,t}$. Then, we have:
\begin{itemize}
\item[(a)] For any $\xb \in X$ and $t\ge 1$,
\begin{align*}
&\Hc_t(\xb_t,\bar{\lb}_t) - \Hc_t(\xb,\bar{\lb}_t) \\
\le &~\frac{1}{2\a_t}\left(\sum_{i=1}^N \|\xb_{i,t} - \xb_i\|^2 - \sum_{i=1}^N \|\xb_{i,t+1}-\xb_i\|^2\right) \\
&~+ \frac{\a_t}{2} N\left(L_f + \frac{1}{N}L_{\gb}L_{\Fc}C_{\lb}\right)^2\\
&~+2C_{\xb}C_{\lb}L_{\gb}G_{\Fc}\sum_{i=1}^N\|\tyb_{i,t}-\bar{\yb}_t\|\\
&~+\frac{2}{N}C_{\xb}L_{\gb}L_{\Fc}\sum_{i=1}^N\|\tlb_{i,t}-\bar{\lb}_t\|.
\end{align*}
\item[(b)] For any $\lb \in \mathbb{R}^m_+$ and $t\ge 1$,
\begin{align*}
&\Hc_t(\xb_t,\lb) - \Hc_t(\xb_t,\bar{\lb}_t) \\
\le &\frac{1}{2\a_t}\hspace{-0.7mm}\left(\sum_{i=1}^N\|\lb_{i,t} - \lb\|^2\hspace{-0.7mm} - \hspace{-0.7mm}\sum_{i=1}^N\|\lb_{i,t+1} - \lb\|^2\right)\hspace{-0.7mm} +\hspace{-0.7mm} \frac{\a_t}{2N}C_{\Fc}^2 \\
&~ + \frac{C_{\Fc}}{N}\sum_{i=1}^N\|\tlb_{i,t}-\bar{\lb}_t\| + 2C_{\lb}L_{\Fc}\sum_{i=1}^N\left\|\tyb_{i,t}- \bar{\yb}_t\right\|.
\end{align*}
\end{itemize}
\end{lemma}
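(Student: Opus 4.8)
The plan is to prove parts (a) and (b) by exploiting the nonexpansiveness of the Euclidean projection onto $X_i$ (for part (a)) and onto $\mathbb{R}^m_+$ (for part (b)), together with convexity of $\Hc_t$ in $\xb$ and linearity/concavity of $\Hc_t$ in $\lb$. The starting point in both cases is the standard "three-point" expansion: for the primal update \eqref{eqn:algo3}, since $\xb_{i,t+1} = P_{X_i}[\xb_{i,t}-\a_t\sb_{i,t}]$, for any $\xb_i \in X_i$ one has
\[
\|\xb_{i,t+1}-\xb_i\|^2 \le \|\xb_{i,t}-\xb_i\|^2 - 2\a_t\la \sb_{i,t},\xb_{i,t}-\xb_i\ra + \a_t^2\|\sb_{i,t}\|^2.
\]
Summing over $i$, rearranging, and dividing by $2\a_t$ gives a bound on $\sum_i \la \sb_{i,t},\xb_{i,t}-\xb_i\ra$ by the telescoping term plus $\frac{\a_t}{2}\sum_i\|\sb_{i,t}\|^2$. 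The key reduction is then to compare $\la \sb_{i,t},\xb_{i,t}-\xb_i\ra$ with the "true" gradient inner product $\la\nabla_{\xb_i}\Hc_t(\xb_t,\blb_t),\xb_{i,t}-\xb_i\ra$, because $\sb_{i,t}$ in \eqref{eqn:sb} uses the local surrogates $\tyb_{i,t}$ and $\tlb_{i,t}$ instead of the true averages; and then to use convexity, i.e. $\sum_i\la\nabla_{\xb_i}\Hc_t(\xb_t,\blb_t),\xb_{i,t}-\xb_i\ra \ge \Hc_t(\xb_t,\blb_t)-\Hc_t(\xb,\blb_t)$.

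For the first reduction I would write, using \eqref{eqn:nablaLx} and \eqref{eqn:sb},
\[
\sb_{i,t} - \nabla_{\xb_i}\Hc_t(\xb_t,\blb_t) = \frac{1}{N}\nabla\gb_i(\xb_{i,t})'\!\left(\nabla\F{N\tyb_{i,t}}\tlb_{i,t} - \nabla\F{\textstyle\sum_\l\gb_\l(\xb_{\l,t})}\blb_t\right),
\]
and here Lemma \ref{lem:rec}(c) is used to identify $\sum_\l\gb_\l(\xb_{\l,t}) = N\byb_t$, so the difference inside the parentheses is $\nabla\F{N\tyb_{i,t}}\tlb_{i,t} - \nabla\F{N\byb_t}\blb_t$. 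Adding and subtracting $\nabla\F{N\tyb_{i,t}}\blb_t$, I split this into a term controlled by $\|\tlb_{i,t}-\blb_t\|$ (using $\|\nabla\F{\cdot}\|\le L_{\Fc}$ from Assumption \ref{assume:Fc}) and a term controlled by $\|\tyb_{i,t}-\byb_t\|$ (using the Lipschitz bound $G_{\Fc}$ on $\nabla\Fc$ and $\|\blb_t\|\le C_{\lb}$ via Assumption \ref{assume:L}(c)). Combined with $\|\nabla\gb_i(\xb_{i,t})\|\le L_{\gb}$ and $\|\xb_{i,t}-\xb_i\|\le 2C_{\xb}$ from Assumption \ref{assume:L}(b), Cauchy–Schwarz yields exactly the two cross terms $2C_{\xb}C_{\lb}L_{\gb}G_{\Fc}\sum_i\|\tyb_{i,t}-\byb_t\|$ and $\tfrac{2}{N}C_{\xb}L_{\gb}L_{\Fc}\sum_i\|\tlb_{i,t}-\blb_t\|$ appearing in the claim. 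The step-size-squared term is handled by bounding $\|\sb_{i,t}\| \le L_f + \tfrac1N L_{\gb}L_{\Fc}C_{\lb}$ (again from \eqref{eqn:sb} and the boundedness assumptions), so $\frac{\a_t}{2}\sum_i\|\sb_{i,t}\|^2 \le \frac{\a_t}{2}N(L_f+\tfrac1N L_{\gb}L_{\Fc}C_{\lb})^2$. This closes part (a).

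For part (b) I would repeat the same recipe on the dual update \eqref{eqn:algo4}: since $\lb_{i,t+1} = [\tlb_{i,t} + \tfrac{\a_t}{N}\F{N\tyb_{i,t}}]_+$ is the projection of $\tlb_{i,t}+\tfrac{\a_t}{N}\F{N\tyb_{i,t}}$ onto $\mathbb{R}^m_+$, nonexpansiveness gives, for any $\lb\in\mathbb{R}^m_+$,
\[
\|\lb_{i,t+1}-\lb\|^2 \le \|\tlb_{i,t}-\lb\|^2 + \tfrac{2\a_t}{N}\la\F{N\tyb_{i,t}},\tlb_{i,t}-\lb\ra + \tfrac{\a_t^2}{N^2}\|\F{N\tyb_{i,t}}\|^2,
\]
and then $\|\tlb_{i,t}-\lb\|^2 \le \|\lb_{i,t}-\lb\|^2$ by convexity of $\norm{\cdot-\lb}^2$ and double stochasticity of $W_t$ applied to \eqref{eqn:algo1} (Jensen). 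Using \eqref{eqn:C_Fc} to bound $\|\F{N\tyb_{i,t}}\|\le C_{\Fc}$ gives the $\frac{\a_t}{2N}C_{\Fc}^2$ term after summing over $i$ and dividing by $2\a_t$. The inner product $\la\F{N\tyb_{i,t}},\tlb_{i,t}-\lb\ra$ must be compared to $\la\nabla_{\lb}\Hc_t(\xb_t,\blb_t),\lb-\blb_t\ra$-type quantities; since $\Hc_t$ is linear in $\lb$, we have the exact identity $\Hc_t(\xb_t,\lb)-\Hc_t(\xb_t,\blb_t) = \la\nabla_\lb\Hc_t(\xb_t,\blb_t),\lb-\blb_t\ra = \tfrac1N\la\F{N\byb_t},\lb-\blb_t\ra$ (using Lemma \ref{lem:rec}(c) again), and the discrepancy $\tfrac1N\F{N\tyb_{i,t}}$ vs. $\tfrac1N\F{N\byb_t}$ is bounded by $L_{\Fc}\|\tyb_{i,t}-\byb_t\|$, while replacing $\blb_t$ by $\tlb_{i,t}$ in the averaging costs $\|\tlb_{i,t}-\blb_t\|$ times $\|\F{N\tyb_{i,t}}\|\le C_{\Fc}$; with $\|\lb-\blb_t\|$-type factors bounded via $C_{\lb}$ these produce the claimed cross terms $\tfrac{C_{\Fc}}{N}\sum_i\|\tlb_{i,t}-\blb_t\|$ and $2C_{\lb}L_{\Fc}\sum_i\|\tyb_{i,t}-\byb_t\|$.

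The main obstacle I anticipate is the careful bookkeeping in passing between the local surrogates $(\tyb_{i,t},\tlb_{i,t})$ and the global averages $(\byb_t,\blb_t)$ — in particular making sure that the linearized identities coming from convexity of $\Hc_t$ in $\xb$ and linearity in $\lb$ line up exactly with the perturbation-error splitting, so that no stray terms survive. A secondary subtlety is justifying the Jensen step $\sum_i\|\tlb_{i,t}-\lb\|^2 \le \sum_i\|\lb_{i,t}-\lb\|^2$ (and the analogue for $\tyb$ if needed), which relies on double stochasticity of $W_t$ from Assumption \ref{assume:network}(b); everything else is a routine application of Cauchy–Schwarz together with the uniform bounds $C_{\xb}, C_{\lb}, C_{\Fc}, L_f, L_{\gb}, L_{\Fc}, G_{\Fc}$ collected in Assumptions \ref{assume:L} and \ref{assume:Fc} and in \eqref{eqn:C_Fc}.
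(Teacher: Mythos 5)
Your proposal follows essentially the same route as the paper's own proof: both parts start from the nonexpansive-projection expansion of the squared distances, split the surrogate direction $\sb_{i,t}$ (resp.\ $\F{N\tyb_{i,t}}$) into the gradient of $\Hc_t$ evaluated at the network averages plus consensus-error terms controlled by $\|\tlb_{i,t}-\blb_t\|$ and $\|\tyb_{i,t}-\byb_t\|$, and finish with convexity (resp.\ linearity) of $\Hc_t$, Cauchy--Schwarz, and the uniform bounds. The constant bookkeeping and the Jensen step $\sum_{i}\|\tlb_{i,t}-\lb\|^2\le\sum_{i}\|\lb_{i,t}-\lb\|^2$ coincide with the paper's argument.
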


This result indicates that the first two terms in Lemma \ref{lem:iter}(a)-(b)
can be bounded if the stepsize $\a_t$ is properly chosen,
while the remaining terms can be interpreted as errors due to the decentralization
and can be bounded using the results in Lemma \ref{lem:disagree}.

\subsection{Main Results\label{ssec:main}}

In this section, we obtain sublinear bounds for the regret and constraint violation defined in (\ref{eqn:net_reg}) and (\ref{eqn:con_reg}), respectively. The next proposition proves sublinearly bounded regret in the cost function.
\begin{proposition}\label{prop:local_reg}
Let Assumptions \ref{assume:network} - \ref{assume:Fc} hold.
Consider sequences $\{\xb_{i,t}\}_{i\in\Vc,t\ge 1}$ generated by the online distributed algorithm in (\ref{eqn:algo1})-(\ref{eqn:algo5}).
Then, with a stepsize choice of
\[
\a_t = \frac{1}{\sqrt{t}},
\]
the regret $\Rc(T)$ with respect to the best fixed offline decision $\xb^* \in X$ can be upper bounded as follows: For any $T \ge 1$,
\sml{
\begin{align*}
\Rc(T) \le &~ D_1(N)+ D_2(N)\sqrt{T},
\end{align*}
}where 
the constants $D_1(N)$ and $D_2(N)$ are defined in Appendix \ref{app:nomen}.
\end{proposition}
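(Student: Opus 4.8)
The plan is to bound the regret $\Rc(T)$ by summing the inequality in Lemma~\ref{lem:iter}(a) over $t = 1,\ldots,T$ after carefully reconciling the Lagrangian $\Hc_t$ with the actual cost function $f_t$. First I would expand $\Hc_t(\xb_t,\blb_t) - \Hc_t(\xb^*,\blb_t)$ using the definition \eqref{eqn:Hc}: this difference equals $\sum_i f_{i,t}(\xb_{i,t}) - \sum_i f_{i,t}(\xb_i^*)$ plus the dual penalty terms $\frac{1}{N}\blb_t' \bigl(\F{\sum_\l \gb_\l(\xb_{\l,t})} - \F{\sum_\l \gb_\l(\xb_\l^*)}\bigr)$. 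Since $\xb^* \in \Xc$ is feasible, $\sum_\l \gb_\l(\xb_\l^*) \preceq \0b$, so by Assumption~\ref{assume:Fc} we have $\F{\sum_\l \gb_\l(\xb_\l^*)} \preceq \0b$, and since $\blb_t \succeq \0b$, that contribution is $\le 0$ and can be dropped; the remaining penalty term $\frac{1}{N}\blb_t'\F{\sum_\l \gb_\l(\xb_{\l,t})}$ is nonnegative (it appears on the wrong side), so it must be carried along and bounded separately. Actually, the cleaner route is to note $\sum_i f_{i,t}(\xb_{i,t}) - \sum_i f_{i,t}(\xb_i^*) \le \Hc_t(\xb_t,\blb_t) - \Hc_t(\xb^*,\blb_t)$ directly, because the extra term on the left-minus-right is exactly $-\frac{1}{N}\blb_t'\F{\sum_\l \gb_\l(\xb_\l^*)} \ge 0$ when moved appropriately — I would set up the bookkeeping so that $\Rc(T) \le \sum_{t=1}^T \bigl(\Hc_t(\xb_t,\blb_t) - \Hc_t(\xb^*,\blb_t)\bigr)$ plus a harmless feasibility term.

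Next I would sum Lemma~\ref{lem:iter}(a) over $t$. The first (telescoping) term $\frac{1}{2\a_t}\bigl(\sum_i\|\xb_{i,t}-\xb_i^*\|^2 - \sum_i\|\xb_{i,t+1}-\xb_i^*\|^2\bigr)$ is handled by the standard Zinkevich-style argument: with $\a_t = 1/\sqrt{t}$, Abel summation gives $\sum_t \frac{1}{2\a_t}(a_t - a_{t+1}) \le \frac{1}{2\a_1}a_1 + \frac12 \sum_{t=2}^T (\frac{1}{\a_t} - \frac{1}{\a_{t-1}}) a_t \le \frac{1}{2\a_T} \max_t a_t$, and $\max_t \sum_i \|\xb_{i,t}-\xb_i^*\|^2 \le 4 N C_{\xb}^2$ by Assumption~\ref{assume:L}(b); this contributes an $O(\sqrt{T})$ term. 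The second term $\frac{\a_t}{2}N(L_f + \frac{1}{N}L_{\gb}L_{\Fc}C_{\lb})^2$ sums to $O(\sqrt{T})$ since $\sum_{t=1}^T \a_t = \sum_{t=1}^T 1/\sqrt{t} \le 2\sqrt{T}$. The last two terms, $2C_{\xb}C_{\lb}L_{\gb}G_{\Fc}\sum_i\|\tyb_{i,t}-\byb_t\|$ and $\frac{2}{N}C_{\xb}L_{\gb}L_{\Fc}\sum_i\|\tlb_{i,t}-\blb_t\|$, are exactly the network-disagreement quantities controlled by Lemma~\ref{lem:disagree}, which gives $\sum_{t=1}^T\sum_i \|\tlb_{i,t}-\blb_t\| \le B_1(N) + B_2(N)\sqrt{T}$ and similarly for $\tyb$; so these also contribute $O(\sqrt{T})$.

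Finally I would collect all the pieces, identifying $D_1(N)$ with the sum of the $t$-independent constants ($\frac{1}{2}\cdot 4NC_{\xb}^2/\a_1$ reduces to a constant, plus $2C_{\xb}C_{\lb}L_{\gb}G_{\Fc}B_3(N)$ and $\frac{2}{N}C_{\xb}L_{\gb}L_{\Fc}B_1(N)$) and $D_2(N)$ with the coefficients of $\sqrt{T}$ ($2NC_{\xb}^2$ from the telescoping term with $1/\a_T = \sqrt{T}$, $N(L_f + \frac1N L_{\gb}L_{\Fc}C_{\lb})^2$ from the step-size sum with $\sum\a_t \le 2\sqrt{T}$, and $2C_{\xb}C_{\lb}L_{\gb}G_{\Fc}B_4(N) + \frac2N C_{\xb}L_{\gb}L_{\Fc}B_2(N)$ from the disagreement bounds). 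The main obstacle, and the step requiring the most care, is the first one: correctly relating the true regret $\Rc(T)$ to the summed Lagrangian differences. One must verify that the dual penalty term evaluated at the \emph{optimal} point $\xb^*$ drops out by feasibility plus Assumption~\ref{assume:Fc} plus $\blb_t \succeq \0b$, and that the penalty term at the iterates $\xb_{i,t}$ has the right sign so that it does not need a separate (potentially non-sublinear) bound — it should be absorbed on the correct side of the inequality chain. The telescoping-with-time-varying-stepsize manipulation and the bookkeeping of all the problem/network constants into $D_1(N), D_2(N)$ are routine but tedious.
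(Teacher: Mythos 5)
There is a genuine gap at exactly the step you flag as ``requiring the most care,'' and your proposed resolution of it does not work. Expanding $\Hc_t(\xb_t,\blb_t) - \Hc_t(\xb^*,\blb_t)$ leaves you with \emph{two} penalty terms: $-\frac{1}{N}\blb_t'\F{\sum_\l \gb_\l(\xb_\l^*)}$, which is indeed $\ge 0$ by feasibility of $\xb^*$, Assumption~\ref{assume:Fc} and $\blb_t \succeq \0b$, and can be discarded; and $+\frac{1}{N}\blb_t'\F{\sum_\l \gb_\l(\xb_{\l,t})}$, which has \emph{no} definite sign under Assumption~\ref{assume:Fc} alone. The assumption only guarantees $\Fc(\xb) \preceq \0b$ when $\xb \preceq \0b$ (e.g.\ $\Fc(\xb)=\xb$ is admissible), so whenever the iterates are strictly feasible this term is negative, and your claimed inequality $\sum_i f_{i,t}(\xb_{i,t}) - \sum_i f_{i,t}(\xb_i^*) \le \Hc_t(\xb_t,\blb_t) - \Hc_t(\xb^*,\blb_t)$ fails. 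Your fallback of ``carrying it along and bounding it separately'' also fails: the crude bound $|\frac{1}{N}\blb_t'\F{\sum_\l\gb_\l(\xb_{\l,t})}| \le \frac{1}{N}C_{\lb}C_{\Fc}$ is only a constant per step, which accumulates to $O(T)$, destroying sublinearity.

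The missing ingredient is Lemma~\ref{lem:iter}(b), which the paper adds to part (a) before summing. Taking $\lb = \0b$ in part (b) gives an upper bound on $\Hc_t(\xb_t,\0b) - \Hc_t(\xb_t,\blb_t) = -\frac{1}{N}\blb_t'\F{\sum_\l\gb_\l(\xb_{\l,t})}$ consisting of a telescoping dual term (contributing the $2NC_{\lb}^2/\a_T$ piece you are missing from $D_2(N)$; compare $K_3$ in Appendix~\ref{app:nomen}), an $O(\a_t)$ term, and disagreement terms --- i.e.\ the dual ascent step itself controls the accumulated value of this quantity sublinearly. Adding (a) and (b) makes the indefinite-sign term cancel exactly, leaving $\Hc_t(\xb_t,\0b) - \Hc_t(\xb^*,\blb_t) = \sum_i f_{i,t}(\xb_{i,t}) - \sum_i f_{i,t}(\xb_i^*) - \frac{1}{N}\blb_t'\F{\sum_\l\gb_\l(\xb_\l^*)} \ge$ the per-step regret, after which your remaining steps (telescoping with $\a_t = 1/\sqrt{t}$, $\sum_t \a_t \le 2\sqrt{T}$, and Lemma~\ref{lem:disagree} for the disagreement sums) go through exactly as in the paper.
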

\begin{proof}
By adding the two inequalities in Lemma \ref{lem:iter}(a)-(b), and summing this over $t= 1,\ldots,T$, we have for any $\xb \in X$ and $\lb \in \mathbb{R}^m_+$
\begin{align}\label{eqn:thm1_iter}
&\sum_{t=1}^T\left[\Hc_t(\xb_t,\lb) - \Hc_t(\xb,\blb_t)\right]\\\
\le &~ \sum_{t=1}^T\frac{1}{2\a_t}\left(\sum_{i=1}^N\|\xb_{i,t} - \xb_i\|^2 - \sum_{i=1}^N\|\xb_{i,t+1}-\xb_i\|^2\right)\nonumber\\
&~+\sum_{t=1}^T\frac{1}{2\a_t}\left(\sum_{i=1}^N\|\lb_{i,t} - \lb\|^2 - \sum_{i=1}^N\|\lb_{i,t+1} - \lb\|^2\right)\nonumber\\
&~+ a_1\sum_{t=1}^T\a_t + a_2\sum_{t=1}^T\sum_{i=1}^N\|\tlb_{i,t}-\blb_t\|\nonumber\\
&~+ a_3\sum_{t=1}^T\sum_{i=1}^N\|\tyb_{i,t}-\byb_t\|,\nonumber
\end{align}
where
\begin{align*}
a_1 = &~\frac{N}{2}\left(L_f+\frac{1}{N}L_{\gb}L_{\Fc}C_{\lb}\right)^2 + \frac{1}{2N}C_{\Fc}^2,\\
a_2 = &~ \frac{1}{N}\left(2C_{\xb}L_{\gb}L_f + C_{\Fc}\right),\\
a_3 = &~ 2C_{\xb}C_{\lb}L_{\gb}G_{\Fc} + 2C_{\lb}L_{\Fc}.
\end{align*}
The first term on the right-hand side of (\ref{eqn:thm1_iter}) can be bounded as
\begin{align*}
\sum_{t=1}^T&\frac{1}{2\a_t}\left(\sum_{i=1}^N\|\xb_{i,t} - \xb_i\|^2 - \sum_{i=1}^N\|\xb_{i,t+1}-\xb_i\|^2\right)\\
\le &~\frac{1}{2\a_1}\sum_{i=1}^N\|\xb_{i,1} - \xb_i\|^2 \\
&~+ \frac{1}{2}\sum_{t=2}^T \left(\frac{1}{\a_t}-\frac{1}{\a_{t-1}}\right) \sum_{i=1}^N\|\xb_{i,t} - \xb_i\|^2 \\
\le &~\frac{2}{\a_T}NC_{\xb}^2,
\end{align*}
where we dropped out a negative term $-\frac{1}{2\a_T}\sum_{i=1}^N\|\xb_{i,T+1}-\xb\|^2$ in the first inequality and the second inequality follows from the boundedness of the sets $X_i$.
Similarly, the second term on the right-hand side of (\ref{eqn:thm1_iter}) can be bounded as
\begin{align*}
\sum_{t=1}^T\frac{1}{2\a_t}\left(\sum_{i=1}^N\|\lb_{i,t} \hspace{-0.5mm}- \hspace{-0.5mm}\lb\|^2 \hspace{-0.5mm}- \hspace{-0.5mm} \sum_{i=1}^N\|\lb_{i,t+1} \hspace{-0.5mm}- \hspace{-0.5mm} \lb\|^2\right)
\hspace{-0.5mm}\le \hspace{-0.5mm}\frac{2}{\a_T}NC_{\lb}^2.
\end{align*}
Combining these two relations with (\ref{eqn:thm1_iter}), we obtain
\begin{align}\label{eqn:thm1_iter2}
&\sum_{t=1}^T\left[\Hc_t(\xb_t,\lb) - \Hc_t(\xb,\blb_t)\right]\\
\le &~ \frac{2}{\a_T}N(C_{\xb}^2+C_{\lb}^2) + a_1\sum_{t=1}^T\a_t\nonumber\\
&~+a_2\sum_{t=1}^T\sum_{i=1}^N\|\tlb_{i,t}-\blb_t\|
+ a_3\sum_{t=1}^T\sum_{i=1}^N\|\tyb_{i,t}-\byb_t\|.\nonumber
\end{align}
Using the definition of the step size $\a_t$, we further obtain
\begin{align}\label{eqn:thm1_iter3}
\sum_{t=1}^T&\left[\Hc_t(\xb_t,\lb) - \Hc_t(\xb,\blb_t)\right]\\
\le &~ \left[2N(C_{\xb}^2+C_{\lb}^2)+2a_1\right]\sqrt{T} \nonumber\\
&~+a_2\sum_{t=1}^T\sum_{i=1}^N\|\tlb_{i,t}-\blb_t\|
+ a_3\sum_{t=1}^T\sum_{i=1}^N\|\tyb_{i,t}-\byb_t\|,\nonumber
\end{align}
where we used $\sum_{t=1}^T\a_t = \sum_{t=1}^T \frac{1}{\sqrt{t}} \le 1 + \int_1^T \frac{1}{\sqrt{t}}dt \le 2\sqrt{T}-1$.

Since the above inequality holds for any $\xb \in X$, $\lb \in \mathbb{R}^m_+$ and $\Fc$ satisfying Assumption \ref{assume:Fc},
we now let $\xb \triangleq \xb^* \in \Xc$ (cf. Equation (\ref{eqn:xopt})) and $\lb \triangleq \0b$. Since $\sum_{\l=1}^N\gb_{\l}(\xb_{\l}^*) \preceq \0b$, from Assumption \ref{assume:Fc} we have $\F{\sum_{\l=1}^N \gb_{\l}(\xb_{\l}^*)} \preceq \0b$ and $\blb_t'\F{\sum_{\l=1}^N \gb_{\l}(\xb_{\l}^*)} \le \0b$.
From this we obtain
\begin{align*}
&\sum_{t=1}^T\left[\Hc_t(\xb_t,\0b) - \Hc_t(\xb^*,\blb_t)\right]\\
= &~ \sum_{t=1}^T\sum_{i=1}^N\Bigg[f_{i,t}(\xb_{i,t}) \hspace{-0.5mm}+\hspace{-0.5mm} \frac{1}{N}\0b' \F{\sum_{\l=1}^N\gb_{\l}(\xb_{\l,t})}\hspace{-0.5mm}\\
&~-\hspace{-0.5mm} f_{i,t}(\xb_i^*)-\frac{1}{N}\blb_t' \F{\sum_{\l=1}^N\gb_{\l}(\xb_{\l}^*)}\Bigg]\\
\ge &~ \Rc(T).
\end{align*}
Using this and Lemma \ref{lem:disagree} to bound the last two terms on the right-hand side of (\ref{eqn:thm1_iter3}), and arranging all the constants accordingly, we obtain the desired result. $\blacksquare$
\end{proof}

Proposition \ref{prop:local_reg} indicates that the regret is bounded in the order of $O(\sqrt{T})$ for any arbitrary $T \ge 1$
and grows in the order of $N^2$ (cf. see the definition of $D_1(N)$ and $D_2(N)$ in Appendix \ref{app:nomen}).


\sml{
\begin{remark}
It is worth mentioning that the bound presented in Proposition \ref{prop:local_reg}
matches the best known bound for distributed online optimization with time-varying graph topologies (cf. \cite{UW-arxiv,Queen-conf,LNR-online}).
Note that the bounds in \cite{UW-arxiv,Queen-conf} are also in the order of $N^2$ (even though they appear to be in the order of $N$)
as the work in \cite{UW-arxiv} minimizes the average cost $\frac{1}{N}\sum_{i=1}^N f_{i,t}(\xb)$,
and in \cite{Queen-conf} their initialization term is in the order of $N$.
In \cite{Queen-conf,LNR-online} weight-imbalanced graphs are considered, but they cannot handle globally coupled constraints.
Moreover, an additional assumption is made in \cite{Queen-conf} that the objective functions are strongly convex. $\blacksquare$
\end{remark}
}

In the next proposition, we prove a sublinear bound for the constraint violation with an additional assumption on $\Fc$. 
\begin{proposition}\label{prop:con_reg}
Let Assumptions \ref{assume:network} - \ref{assume:Fc} hold.
Assume that the function $\Fc$ is a penalty function, i.e., $\Fc$ is chosen such that $[\Fc(\xb)]_i > 0$ if $[\xb]_i > 0$ and $[\Fc(\xb)]_i = 0$, otherwise.
Consider sequences $\{\xb_{i,t}\}_{i\in\Vc,t\ge 1}$ generated by the online distributed algorithm in (\ref{eqn:algo1})-(\ref{eqn:algo5}).
Then, with a stepsize choice of
\[
\a_t = \frac{1}{\sqrt{t}},
\]
$\Rc^c(T)$ in (\ref{eqn:con_reg}) can be upper bounded as follows: For any $T \ge 1$,
\sml{
\begin{align*}
\Rc^c(T) \le &~ D_3(N) + D_4(N)\sqrt{T},
\end{align*}
}where 
the constants $D_3(N)$ and $D_4(N)$ are defined in Appendix \ref{app:nomen}.
\end{proposition}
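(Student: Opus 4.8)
\textbf{Proof plan for Proposition~\ref{prop:con_reg}.}
The plan is to reuse the ``master inequality'' \eqref{eqn:thm1_iter3} derived in the proof of Proposition~\ref{prop:local_reg}, but to exploit the freedom in the choice of $\lb \in \mathbb{R}^m_+$ differently. Instead of setting $\lb = \0b$, I would first observe that, by Lemma~\ref{lem:rec}, $\frac{1}{N}\sum_{\l=1}^N \gb_\l(\xb_{\l,t}) = \byb_t$, so $\Hc_t(\xb_t,\lb) = \sum_i f_{i,t}(\xb_{i,t}) + \lb'\F{N\byb_t}$. Summing over $t$, the $\lb$-dependent part becomes $\lb'\sum_{t=1}^T\F{N\byb_t}$. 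I would like to pick $\lb$ proportional to $\sum_{t=1}^T\F{N\byb_t}$ (which is componentwise nonnegative because $\Fc$ is a penalty function, so this $\lb$ is indeed in $\mathbb{R}^m_+$), which would turn that term into $\bigl\|\sum_t\F{N\byb_t}\bigr\|$ times a scaling factor.

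Concretely, the steps in order are: (i) take $\xb \triangleq \xb^* \in \Xc$ in \eqref{eqn:thm1_iter3} so the term $-\sum_t\sum_i f_{i,t}(\xb_i^*)$ appears, and note $\F{\sum_\l\gb_\l(\xb_\l^*)} \preceq \0b$; (ii) rewrite the left side using Lemma~\ref{lem:rec} as $\sum_t\sum_i f_{i,t}(\xb_{i,t}) + \lb'\sum_t\F{N\byb_t} - \sum_t\sum_i f_{i,t}(\xb_i^*)$, and lower-bound $\sum_t\sum_i f_{i,t}(\xb_{i,t}) - \sum_t\sum_i f_{i,t}(\xb_i^*)$ crudely by $-2NTC_f$ using \eqref{eqn:C_f}; (iii) set $\lb = v\,\bigl\|\sum_t\F{N\byb_t}\bigr\|^{-1}\sum_t\F{N\byb_t}$ for a scalar $v>0$ to be chosen (and $\lb=\0b$ if the sum is zero), so $\lb'\sum_t\F{N\byb_t} = v\bigl\|\sum_t\F{N\byb_t}\bigr\|$ and $\|\lb\| = v$; (iv) the only place $\lb$ now enters the right side of \eqref{eqn:thm1_iter3} is through $\sum_t\frac{1}{2\a_t}\sum_i\|\lb_{i,t}-\lb\|^2$, which is bounded by $\frac{2}{\a_T}N(C_\lb^2 + v^2)$ via $\|\lb_{i,t}-\lb\|^2 \le 2C_\lb^2 + 2v^2$ and telescoping as in the earlier proof; then optimize/choose $v$ (any fixed constant, or $v=1$, works to get the $O(\sqrt{T})$ rate — the quadratic $v^2$ term only inflates constants).

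Rearranging gives $v\bigl\|\sum_t\F{N\byb_t}\bigr\| \le (\text{terms of order }\sqrt{T}) + 2NTC_f$, and here is the one subtlety: the crude bound $-2NTC_f$ is \emph{linear} in $T$, which would ruin sublinearity unless we relate $\F{N\byb_t}$ back to $\Rc^c(T)$ more carefully. The resolution is that $\Rc^c(T) = \bigl\|\sum_t\F{\sum_i\gb_i(\xb_{i,t})}\bigr\| = \bigl\|\sum_t\F{N\byb_t}\bigr\|$ by Lemma~\ref{lem:rec}, so what we actually get is $\Rc^c(T) \le C_1(N) + C_2(N)\sqrt{T} + (2NC_f/v)T$; to remove the linear term one must instead use a \emph{finer} argument — bounding $\sum_t\sum_i f_{i,t}(\xb_{i,t}) - \sum_t\sum_i f_{i,t}(\xb_i^*)$ not by $-2NTC_f$ but by recognizing that the \emph{same} left-hand quantity, with $\lb$ replaced by $\0b$, is exactly $\Rc(T) + (\text{nonnegative penalty})$, hence is itself $O(\sqrt{T})$ by Proposition~\ref{prop:local_reg}. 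So the correct route is: add and subtract, use that $\sum_t[\Hc_t(\xb_t,\0b) - \Hc_t(\xb^*,\blb_t)] \ge \Rc(T) \ge -(\text{something})$ is not needed; rather, from \eqref{eqn:thm1_iter3} with the penalty-function $\lb$ and $\xb=\xb^*$, the cost difference $\sum_t\sum_i(f_{i,t}(\xb_{i,t}) - f_{i,t}(\xb_i^*))$ is lower bounded by $-\Rc(T)^{-}$; but since Proposition~\ref{prop:local_reg} already bounds $\Rc(T)$ from above and $f_{i,t}$ is bounded from below by $-C_f$, the quantity $\sum_t\sum_i(f_{i,t}(\xb_{i,t}) - f_{i,t}(\xb_i^*)) \ge -\Rc(T) \ge -(D_1(N)+D_2(N)\sqrt{T})$ — wait, that is the wrong sign. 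The clean fix: note $\sum_t\sum_i f_{i,t}(\xb_{i,t}) \ge -NTC_f$ is too weak, so instead keep $\sum_t\sum_i f_{i,t}(\xb_{i,t})$ and use that $\sum_t\sum_i f_{i,t}(\xb_i^*) \le NTC_f$ while the left side minus $v\|\cdot\|$ equals $\sum_t\sum_i f_{i,t}(\xb_{i,t}) - \sum_t\sum_i f_{i,t}(\xb_i^*) = \Rc(T) \le D_1 + D_2\sqrt{T}$, giving $v\Rc^c(T) \le D_1 + D_2\sqrt{T} + (\text{RHS of }\eqref{eqn:thm1_iter3}) $, all of order $\sqrt{T}$. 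The main obstacle, then, is precisely this bookkeeping: choosing $\lb$ as a penalty-scaled version of the accumulated violation, tracking that it only affects one dual-proximal term, and invoking Proposition~\ref{prop:local_reg} to control the leftover cost-difference term so that no spurious $O(T)$ term survives; collecting constants yields $\Rc^c(T) \le D_3(N) + D_4(N)\sqrt{T}$.
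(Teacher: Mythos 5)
Your route is genuinely different from the paper's, and it contains a gap that you half-notice but do not actually close. You work from the master inequality \eqref{eqn:thm1_iter3} with $\lb$ chosen proportional to $\sum_{t}\F{N\byb_t}$, which leaves you with
\begin{align*}
\frac{v}{N}\,\Rc^c(T) \;\le\; -\Rc(T) \;+\; \frac{2N}{\a_T}\paren{C_{\lb}^2+v^2} \;+\; O(\sqrt{T}).
\end{align*}
To conclude you need an \emph{upper} bound on $-\Rc(T)$, i.e., a sublinear \emph{lower} bound on $\Rc(T)$. Proposition~\ref{prop:local_reg} only gives an upper bound, so your ``clean fix'' (invoking $\Rc(T)\le D_1+D_2\sqrt{T}$) points the inequality the wrong way, exactly as you suspected mid-argument. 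And no sublinear lower bound on $\Rc(T)$ is available here: the iterates $\xb_{i,t}$ are never projected onto the coupled feasible set $\Xc$ while $\xb^*$ is constrained to lie in it, so $\Rc(T)$ can genuinely be $-\Theta(T)$. Falling back on $-\Rc(T)\le 2NC_fT$ and optimizing the free parameter $v$ against the $v^2\sqrt{T}$ term coming from $\sum_t\frac{1}{2\a_t}\sum_i\|\lb_{i,t}-\lb\|^2$ yields at best $\Rc^c(T)=O(T^{3/4})$, which is the rate this trick gives in the centralized online-constrained literature; it cannot reach the claimed $O(\sqrt{T})$ without extra structure.

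The paper's proof avoids the primal side entirely and is much more direct: from the dual update \eqref{eqn:algo4} one has $\tlb_{i,t}+\frac{\a_t}{N}\F{N\tyb_{i,t}} \preceq \lb_{i,t+1}$ componentwise; summing over $i$ (double stochasticity gives $\sum_i\tlb_{i,t}=\sum_i\lb_{i,t}$), dividing by $\a_t\ge\a_T$, and telescoping over $t$ yields $\frac{1}{N}\sum_{t=1}^T\sum_{i=1}^N\F{N\tyb_{i,t}} \preceq \a_T^{-1}\sum_{i=1}^N\lb_{i,T+1}$. The boundedness of the dual iterates (Assumption~\ref{assume:L}(c)) then controls the right-hand side by $NC_{\lb}\sqrt{T}$, and replacing $\F{N\tyb_{i,t}}$ by $\F{N\byb_t}=\F{\sum_{\l}\gb_{\l}(\xb_{\l,t})}$ costs only $L_{\Fc}\sum_t\sum_i\|\tyb_{i,t}-\byb_t\|$, which is $O(\sqrt{T})$ by Lemma~\ref{lem:disagree}. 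In short, the key idea you are missing is that the constraint violation is already encoded in the growth of the dual variables themselves, so Assumption~\ref{assume:L}(c) does all the work; no choice of comparator $\lb$ in the saddle-point inequality is needed.
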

\begin{proof}
From algorithm (\ref{eqn:algo4}) with $\Lambda = \mathbb{R}_+^m$, we have
\begin{align*}
\tlb_{i,t}+\frac{\a_t}{N}\F{N\tyb_{i,t}} \preceq \left[\tlb_{i,t}+\frac{\a_t}{N}\F{N\tyb_{i,t}}\right]_+ = \lb_{i,t+1}.
\end{align*}
Summing this over $i \in \Vc$ and rearranging the terms, we have for any $1\le t \le T$:
\begin{subequations}
\begin{align}
\frac{1}{N}\sum_{i=1}^N\F{N\tyb_{i,t}} \preceq &~\frac{\sum_{i=1}^N\lb_{i,t+1}-\sum_{i=1}^N\tlb_{i,t}}{\a_t}\nonumber\\
= &~\frac{\sum_{i=1}^N\lb_{i,t+1}-\sum_{i=1}^N\lb_{i,t}}{\a_t}\label{eqn:thm22}\\
\preceq &~\frac{\sum_{i=1}^N\lb_{i,t+1}-\sum_{i=1}^N\lb_{i,t}}{\a_T},\label{eqn:thm23}
\end{align}
\end{subequations}
where 
(\ref{eqn:thm22}) follows from the relation
\[
\sum_{i=1}^N \tlb_{i,t} = \sum_{i=1}^N \sum_{j=1}^N[W_t]_{ij}\lb_{j,t} =  \sum_{j=1}^N\lb_{j,t}\sum_{i=1}^N[W_t]_{ij}=\sum_{i=1}^N \lb_{i,t}
\]
and (\ref{eqn:thm23}) follows from the fact that the step-size $\{\a_t\}$ is a decreasing sequence.
Summing the preceding relation over $t = 1,\ldots, T$, we obtain
\begin{subequations}
\begin{align}
\frac{1}{N}\sum_{t=1}^T\sum_{i=1}^N\F{N\tyb_{i,t}}
\preceq&~\frac{\sum_{i=1}^N\lb_{i,T+1}-\sum_{i=1}^N\lb_{i,1}}{\a_T},\label{eqn:thm2a1}\\
\preceq&~ \frac{\sum_{i=1}^N\lb_{i,T+1}}{\a_T},\label{eqn:thm2a2}
\end{align}
\end{subequations}
where (\ref{eqn:thm2a1}) follows from the telescoping sum of relation (\ref{eqn:thm23}) and
(\ref{eqn:thm2a2}) follows from $\sum_{i=1}^N\lb_{i,1} \succeq 0$.

We now estimate the following quantity:
\begin{subequations}
\begin{align}
&\sum_{t=1}^T N\F{\sum_{\l=1}^N \gb_{\l}(\xb_{\l,t})}\nonumber\\
&~=  \sum_{t=1}^T \sum_{i=1}^N\F{\sum_{\l=1}^N \gb_{\l}(\xb_{\l,t})}\nonumber\\
&~~-\sum_{t=1}^T \sum_{i=1}^N\F{N\tyb_{i,t}} + \sum_{t=1}^T \sum_{i=1}^N\F{N\tyb_{i,t}}\label{eqn:thm2b1}\\
&~\preceq \sum_{t=1}^T \sum_{i=1}^N\left[\F{N\bar{\yb}_t}-\F{N\tyb_{i,t}}\right] + \frac{N\sum_{i=1}^N\lb_{i,T+1}}{\a_T}\label{eqn:thm2b2},
\end{align}
\end{subequations}
where (\ref{eqn:thm2b1}) is from addition and subtraction; (\ref{eqn:thm2b2}) follows from Lemma \ref{lem:rec}
and relation and (\ref{eqn:thm2a2}).
Taking the norm on the above relation and dividing by $N$, we obtain
\begin{align*}
\left\|\sum_{t=1}^T \F{\sum_{\l=1}^N \gb_{\l}(\xb_{\l,t})}\right\|
\le &~ L_{\Fc}\sum_{t=1}^T \sum_{i=1}^N\|\bar{\yb}_t - \tyb_{i,t}\| + \frac{NC_{\lb}}{\a_T},
\end{align*}
where in the last inequality, we applied relation (\ref{eqn:L_Fc}) and Assumption \ref{assume:L} for bounding the dual variables.
Lastly, by using Lemma \ref{lem:disagree} for bounding the first term on the right-hand side, we obtain the desired result. $\blacksquare$
\end{proof}

\sml{
Note that the bounds in propositions \ref{prop:local_reg} and \ref{prop:con_reg} above capture their dependency on the minimum weight $\eta$ and the connectivity $Q$ (cf. Assumption \ref{assume:network}) through the constants $\gamma$ and $\beta$.
Specifically, the constants $D_1(N)$, $D_2(N)$, $D_3(N)$ and $D_4(N)$ increase 
as the minimum weight $\eta$ decreases or the strong connectivity parameter $Q$ increases.
}

\section{Simulation Results\label{sec:sim}}

In this section, we perform some numerical experiments to solve the problem (\ref{eqn:onlineprob1})-(\ref{eqn:onlineprob2}) using Algorithm \ref{alg:DOPD}. In these simulations, the average regrets $\Rc(T)/T$ and $\Rc^c(T)/T$ are monitored as a metric of convergence.

\sml{
We consider networks consisting of $N = 10$, 15, or 20 sources and $K = 2$ access points, wherein the agents were randomly and uniformly distributed in rectangular boxes. Note that only the source nodes participate in the computation.
Figure \ref{fig:SAP} depicts
a snap shot of 20 source nodes and 2 access points.
The gray lines connecting the nodes represent the current routing rates $R_{ij}$,
where thicker lines correspond to higher values.
}

\sml{
We model $R_{ij}$ as a twice differentiable, decreasing function of the inter-node distance (normalized in between 0 and 1) which is a polynomial fitting of curves found in literature \cite{Aguayo_Rij}.
Specifically, let $\pb_i, \pb_j \in \mathbb{R}^2$ be the positions of node $i$ and $j$,
and $\pb_{ij} = \pb_i - \pb_j$. Then, $R_{ij}$ is modelled as
\begin{align*}
R_{ij}(\pb_i,\pb_j) = a\|\pb_{ij}\|^2 + b\|\pb_{ij}\|^2 + c\|\pb_{ij}\|^2 + d
\end{align*}
if $l \le \|\pb_{ij}\| \le u$, $R_{ij}(\pb_{ij}) = 0$ if $\|\pb_{ij}\| < l$,
and $R_{ij}(\pb_i,\pb_j) = 1$ if $\|\pb_i-\pb_j\| > u$,
where
the constants $0 < l < u < 1$ are lower and upper bounds on the inter-node distances,
and $a,b,c$ and $d$ are defined as
\begin{align*}
a = \frac{-2}{(l-u)^3},~b = \frac{3(l+u)}{(l-u)^3},~c = \frac{-6lu}{(l-u)^3},~d = \frac{3lu^2-u^3}{(l-u)^3}.
\end{align*}
Figure~\ref{fig:Rij} depicts
inter-node distance vs. $R_{ij}$ with $l = 0.5$ and $u = 0.8$.
}

\begin{figure}[t]
\centering
\includegraphics[trim = 35mm 80mm 35mm 80mm, clip, width=0.45\textwidth]{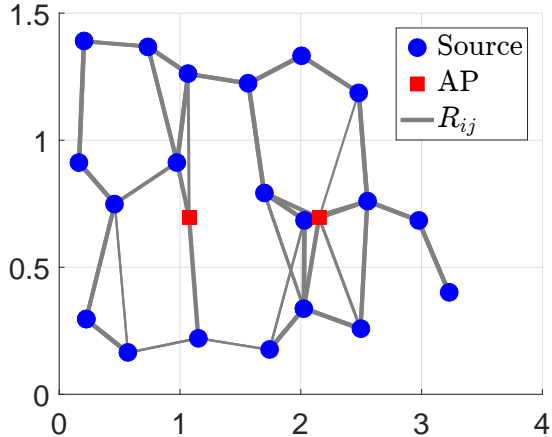}
\caption{Sources and Access Points\label{fig:SAP}}
\end{figure}

In the experiment, we corrupt $R_{ij}$ with uniform random noise. When $R_{ij}$ goes above 1 or below 0, we truncate it to 1 and 0.
We also take out some of the edges such that
each sequence of graphs is $Q$-strongly connected with $Q = 1, 5, 10$ (cf. Assumption \ref{assume:network}).
The communication matrices $W_t$ are defined as  $[W_t]_{ij} = 1/N$ (therefore, $\eta = 1/N$) if the random realization of $R_{ij}$ at time $t$ is nonzero, and $[W_t]_{ij}=0$ otherwise. The diagonal entries are set to be
$[W_t]_{ii} = 1-\sum_{j=1}^N[W_t]_{ij}$.
We set $r_{i,\min} = 0.001$ for all $i$.

\begin{figure}[t]
\centering
\includegraphics[trim = 35mm 80mm 35mm 80mm, clip, width=0.45\textwidth]{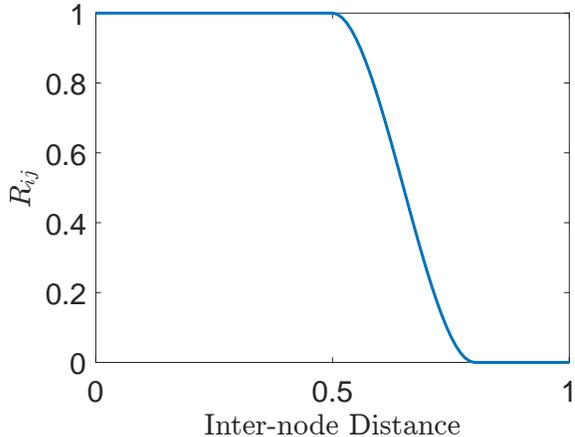}
\caption{Channel Rate $\Es[R_{ij}]$ vs. Inter-node Distance\label{fig:Rij}}
\end{figure}

For the function $\Fc(\xb)$,
we choose a smooth surrogate of the max function $[\xb]_+$.
Such a max function can be found, for example, using the smoothing technique in \cite{Nesterov:2005}.
That is, for an arbitrary $\mu > 0$, a smooth surrogate of $[x]_+$ for $x \in \mathbb{R}$ is defined as
\begin{align}
[x]_+^{\mu} =
\left\{
\begin{array}{ll}
\frac{1}{4\mu}(x+\mu)^2 & \text{ if } -\mu \le x \le \mu,\\
x & \text{ if } \mu > x,\\
0 & \text{ otherwise.}
\end{array}
\right.
\end{align}
This function and its gradient are Lipschitz continuous with $L=1$ and $G = \frac{1}{\mu}$.
Therefore, a component wise max surrogate $\Fc(\xb) = [\xb]_+^{\mu}$
satisfies Assumption \ref{assume:Fc} with constants $L_{\Fc} = \sqrt{m}$ and $G_{\Fc} = \frac{\sqrt{m}}{\mu}$.
Figure \ref{fig:mu} depicts $\Fc(\xb) = [\xb]_+^{\mu}$ with $\mu=1$ and $\mu=5$. Note that the parameter $\mu$ controls the amount of smoothness, i.e.,
$\mu$ can be made arbitrarily small at the cost of a larger 
$G_{\Fc}$.
In the simulation, we used $\mu = 0.001$. 
Note that there are also a number of other methods for finding a penalty function $\Fc$ satisfying Assumption \ref{assume:Fc}, like piecewise polynomial interpolation techniques in \cite{Heath}.

\begin{figure}[t]
\centering
\includegraphics[scale=0.24]{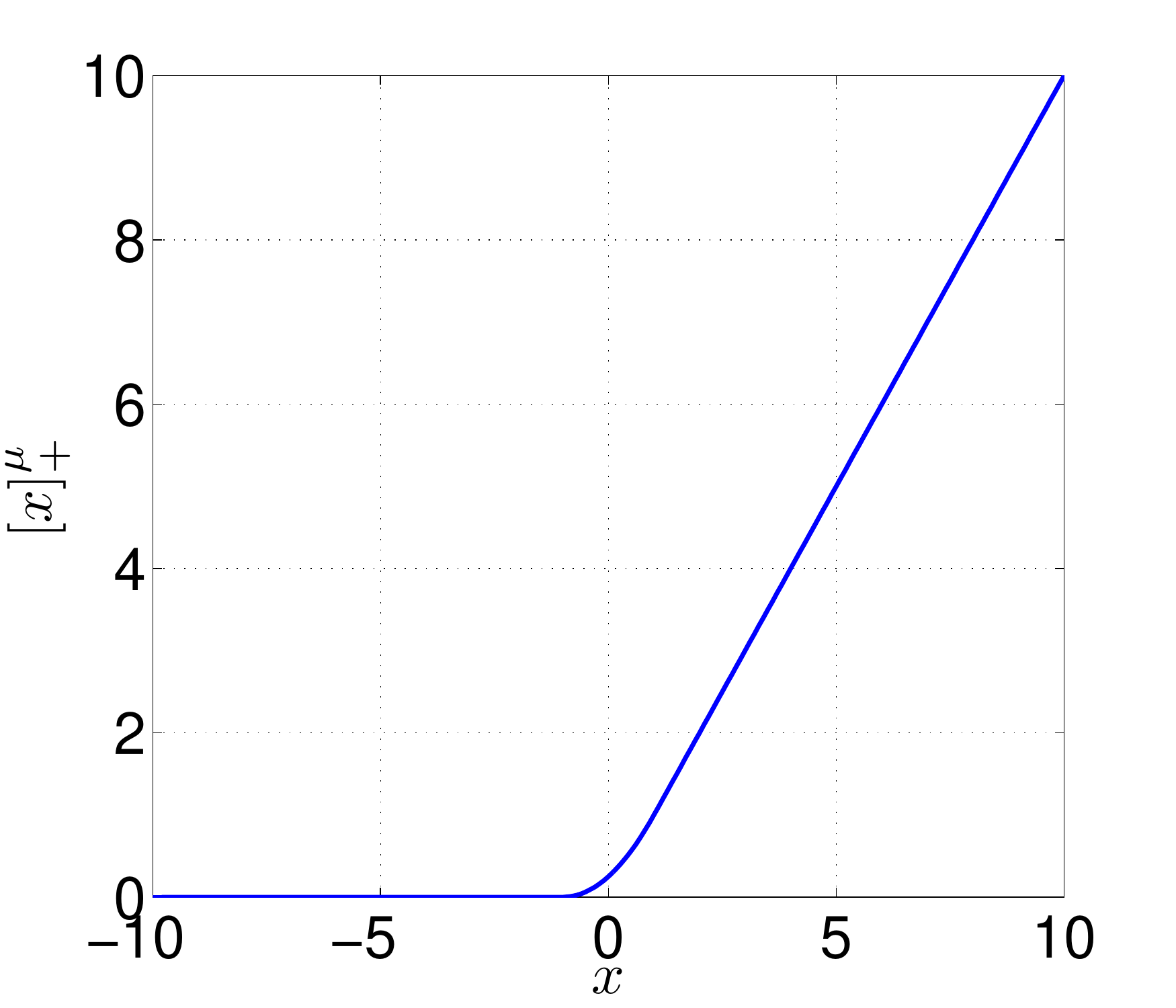}
\includegraphics[scale=0.24]{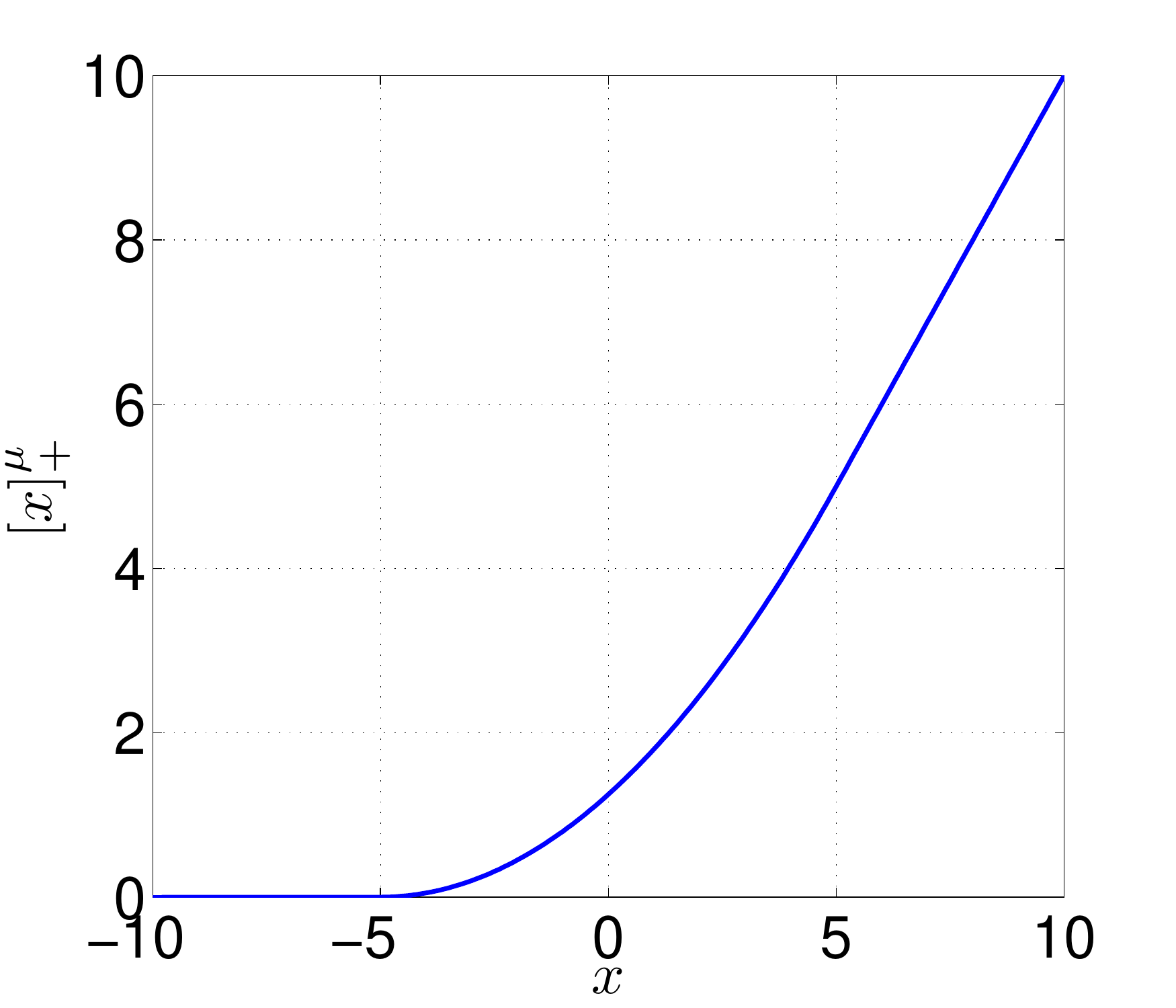}
\caption{Smooth Max Function $\Fc(\xb) = [\xb]_+^{\mu}$ with $\mu = 1$ (left) and $\mu = 5$ (right)\label{fig:mu}}
\end{figure}

\sml{
In what follows, we illustrate convergence of our algorithm for different network sizes.
Figure \ref{fig:regret} shows regrets in terms of cost function value and constraint violation for $N = 10$, 15, and 20 node networks with $Q = 1$.
It shows that the regrets are sublinear for all cases and their average $\Rc(T)/T$ and $\Rc^c(T)/T$ go
to zero as the time increases.
It also shows that the convergence speed gets slower as the number of network nodes increases.
To further investigate the effect of connectivity,
in Figure \ref{fig:regretQ} we depict regrets 
for a $N = 10$ node network with different values of $Q = 1, 5$ and  $10$.
It shows that the convergence speed in the cost function value gets slower as $Q$ increases, while different values of $Q$ does not affect much the convergence in constraint violation.
}

\begin{figure}[t]
\centering
\includegraphics[scale=0.34]{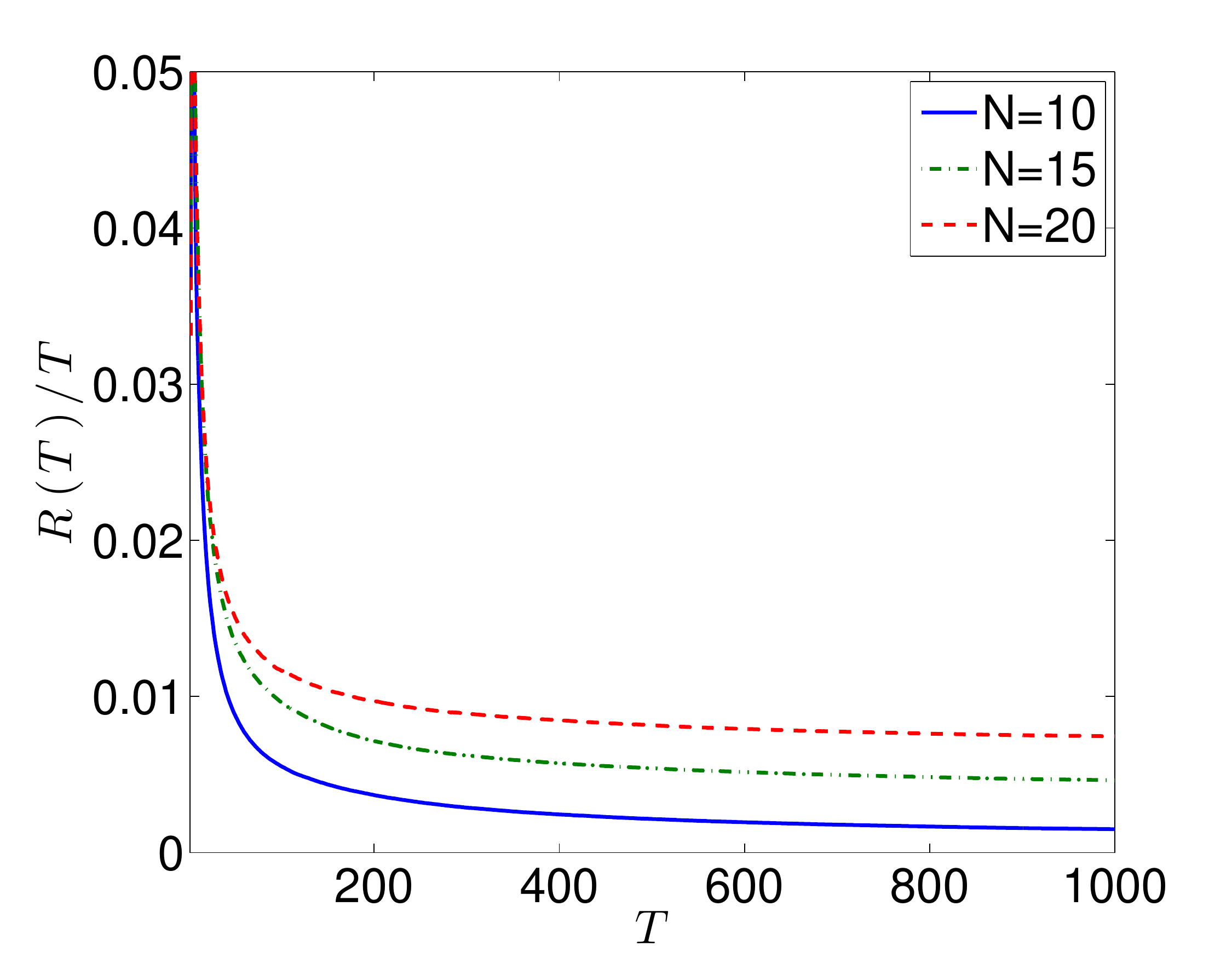}\\
\includegraphics[scale=0.34]{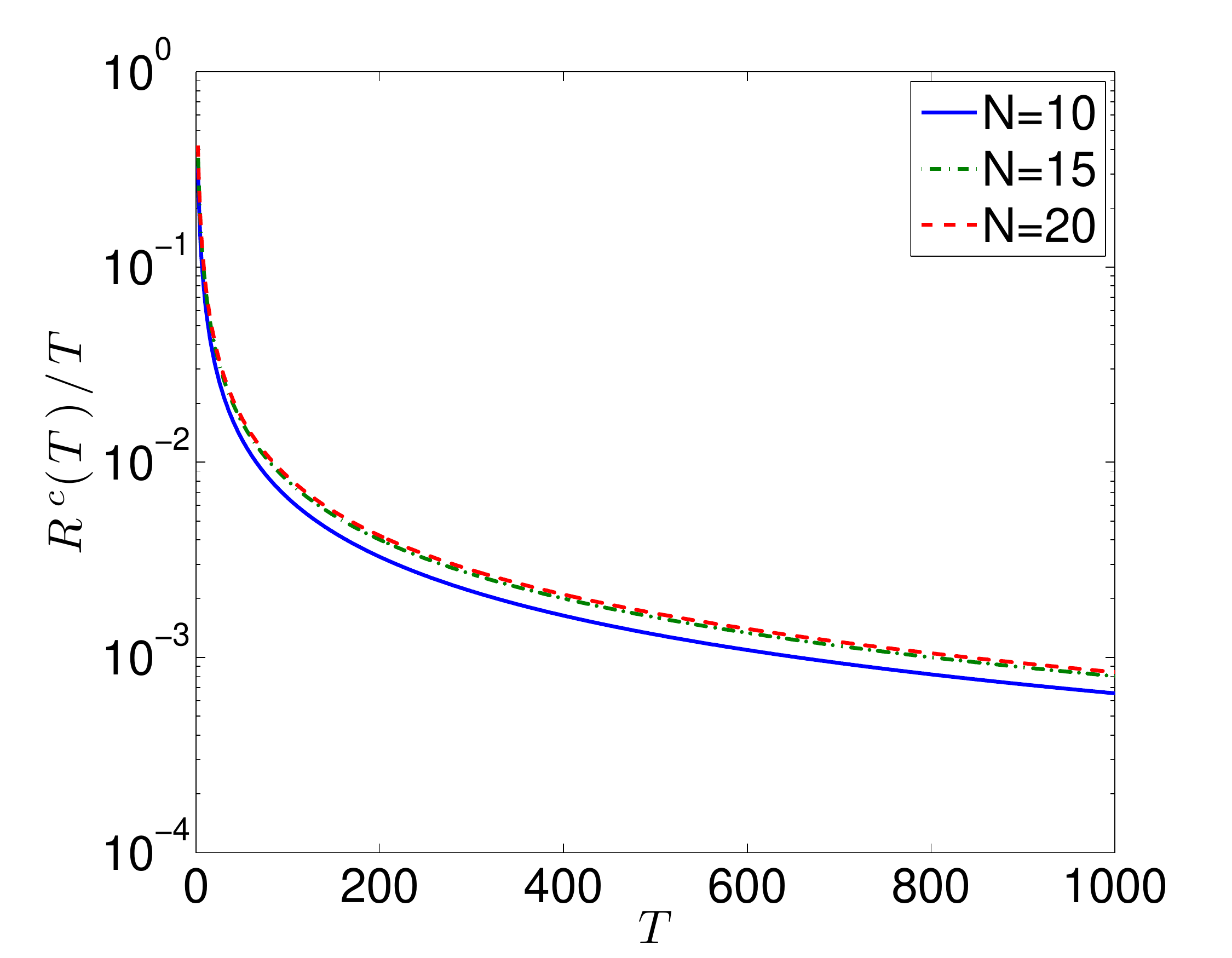}\\
\caption{Regrets of $10$, 15 and 20 node networks with $Q = 1$\label{fig:regret}}
\end{figure}

\begin{figure}[t]
\centering
\includegraphics[trim = 35mm 80mm 35mm 80mm, clip, width=0.45\textwidth]{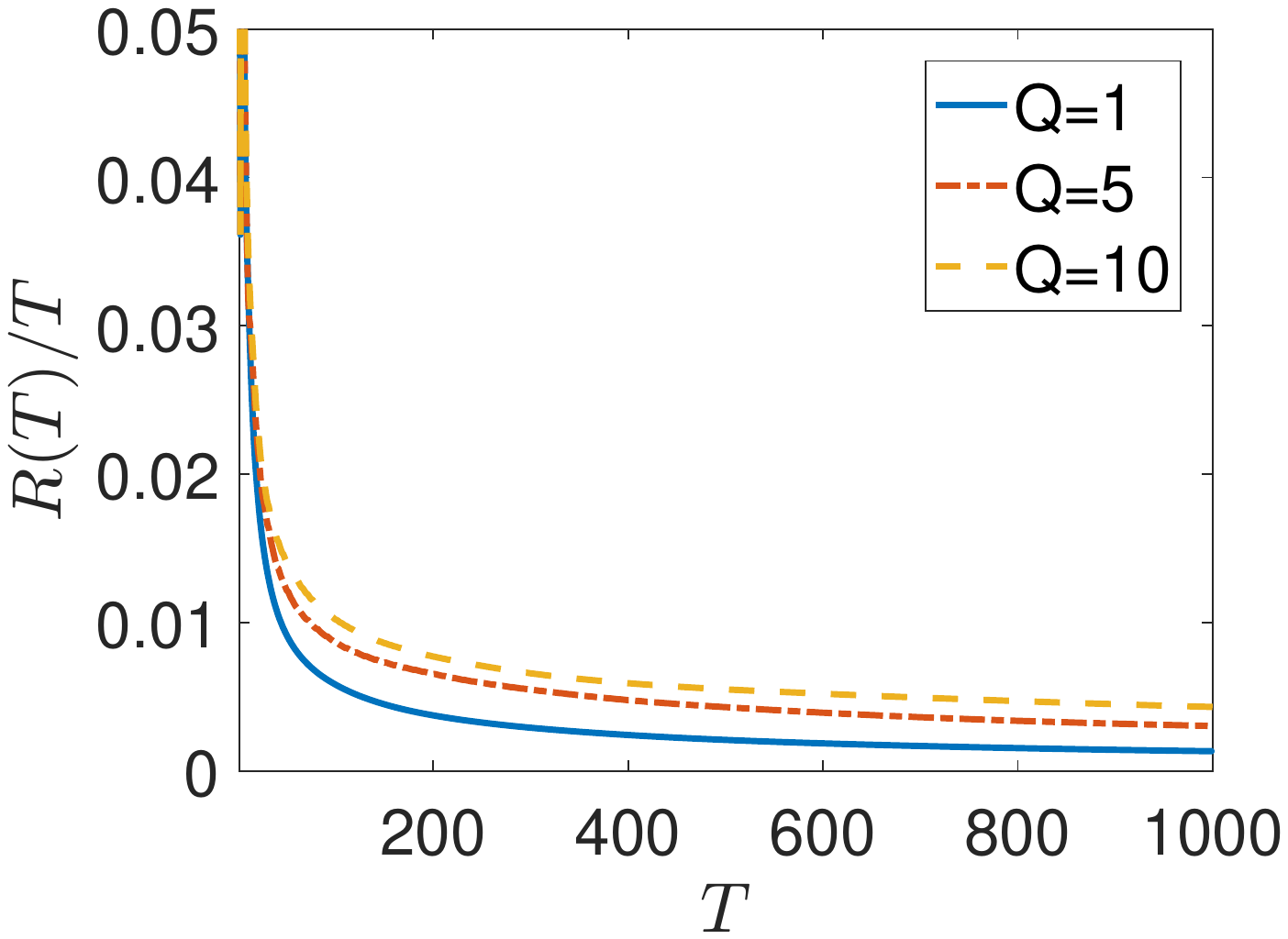}\\
\includegraphics[trim = 35mm 80mm 35mm 80mm, clip, width=0.45\textwidth]{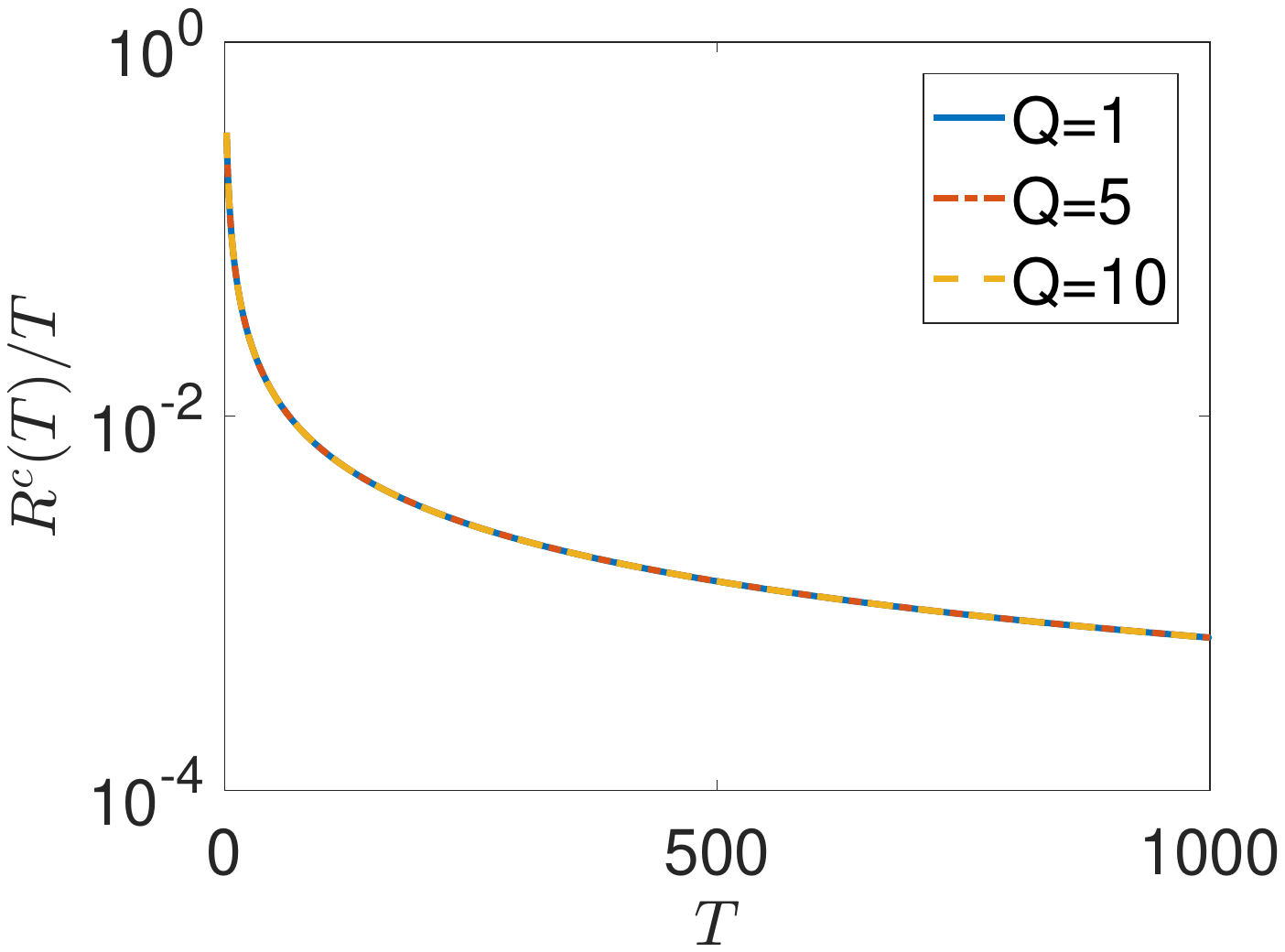}\\
\caption{Regrets of $N = 10$ node network with $Q = 1,5$ and 10\label{fig:regretQ}}
\end{figure}

\section{Conclusions and Future Work\label{sec:con}}
We developed a new decentralized primal-dual method for online distributed optimization involving global constraints.
Under the assumption that the cost and constraint functions 
are Lipschitz continuous,
we showed that the algorithm achieves worst-case individual regret of order $O(\sqrt{T})$ on any sequence of time-varying, weight-balanced and jointly connected graphs.
Numerical results illustrating the performance of the proposed algorithm
are provided for optimal routing in wireless multi-hop networks with uncertain channel rates.

\appendix
\sml{
\subsection{Nomenclature\label{app:nomen}}
\begin{align*}
A_N = &~\frac{\g \b}{1-\b}, \\
&~\textrm{ where } \g = \left(1-\frac{\eta}{2N^2}\right)^{-2},~ \b = \left(1-\frac{\eta}{2N^2}\right)^{\frac{1}{Q}}.
\end{align*}
\begin{align*}
&B_1(N) = \left(2N + A_NN^2\right)C_{\lb}\\
&B_2(N) =  4C_{\Fc} + 2 C_{\Fc}A_NN\\
&B_3(N) = \left(2N + A_NN^2\right)C_{\yb}\\
&B_4(N) =  4L_{\gb}^2L_{\Fc}C_{\lb} + (4L_fL_{\gb} + 2L_{\gb}^2L_{\Fc}C_{\lb}A_N)N \\
&\qquad\qquad + 2L_fL_{\gb}A_NN^2\\\\
&D_1(N) = K_1(2N+A_NN^2)\\
&D_2(N) = K_2 + (K_3+K_4A_N)N + K_5A_NN^2\\
&D_3(N) = K_6(2N+A_NN^2)\\
&D_4(N) = K_7 + (K_8 + K_9A_N)N + K_{10}A_NN^2\\\\
&~K_1 = \frac{1}{N}(2C_{\xb}L_{\gb}L_f+C_{\Fc})C_{\lb} \\
&\qquad+ (2C_{\xb}C_{\lb}L_{\gb}G_{\Fc}+2C_{\lb}L_{\Fc})C_{\yb}\\
&~K_2 = \frac{1}{N}(L_{\gb}^2L_f^2C_{\lb}^2 + C_{\Fc}^2 + 8C_{\Fc}C_{\xb}L_{\gb}L_f + 4C_{\Fc})\\
&\qquad+8C_{\xb}C_{\lb}^2L_{\gb}^3G_{\Fc}L_{\Fc}+8L_{\gb}^2C_{\lb}^2L_{\Fc}^2 + 2L_fL_{\gb}L_{\Fc}C_{\lb}\\
&~K_3 = 2(C_{\xb}^2 +C_{\lb}^2) + L_f^2 + 8L_fL_{\gb}(C_{\xb}C_{\lb}L_{\gb}G_{\Fc}+C_{\lb}L_{\Fc})\\
&~K_4 = 4L_{\gb}^2L_{\Fc}C_{\lb}(C_{\xb}C_{\lb}L_{\gb}G_{\Fc}+C_{\lb}L_{\Fc})\\
&~K_5 = 4L_fL_{\gb}(C_{\xb}C_{\lb}L_{\gb}G_{\Fc}+C_{\lb}L_{\Fc})\\
&~K_6 = L_{\Fc}C_{\yb}\\
&~K_7 = 4L_{\gb}^2L_{\Fc}^2C_{\lb}\\
&~K_8 = 4L_fL_{\gb}L_{\Fc}+C_{\lb}\\
&~K_9 = 2L_{\gb}^2L_{\Fc}^2C_{\lb}\\
&~K_{10} = 2L_fL_{\gb}L_{\Fc}.
\end{align*}
}

\subsection{Proof of Lemma \ref{lem:rec}\label{app:rec}}
Equality (a) follows directly from the definition of $\bar{\yb}_t$. Equality (b) is also straightforward from the algorithm definition in (\ref{eqn:algo2}). That is,
\begin{align}\label{eqn:eq}
\sum_{i=1}^N \tyb_{i,t} = \sum_{i=1}^N \sum_{j=1}^N [W_t]_{ij}\yb_{j,t} =  \sum_{j=1}^N \yb_{j,t}\sum_{i=1}^N [W_t]_{ij} = \sum_{j=1}^N \yb_{j,t},
\end{align}
where the last equality follows from the doubly stochasticity of $W_t$.
Lastly, we prove equality (c) by induction on $t$. From the initialization assumption, we have $\yb_{i,0} = \gb_i(\xb_{i,0})$ for all $i \in \Vc$, and therefore, $\sum_{i=1}^N\yb_{i,0} = \sum_{i=1}^N\gb_i(\xb_{i,0})$.
Assume that for some $t > 0$ there holds
\begin{align}\label{eqn:hyp}
\sum_{i=1}^N \yb_{i,t} = \sum_{i=1}^N \gb_i(\xb_{i,t}).
\end{align}
Then, we have
\begin{subequations}
\begin{align}
\sum_{i=1}^N \yb_{i,t+1} = &~\sum_{i=1}^N \tyb_{i,t} + \sum_{i=1}^N \gb_i(\xb_{i,t+1})-\sum_{i=1}^N \gb_i(\xb_{i,t})\label{eqn:lem21}\\
= &~\sum_{i=1}^N \yb_{i,t} + \sum_{i=1}^N \gb_i(\xb_{i,t+1})-\sum_{i=1}^N \gb_i(\xb_{i,t})\label{eqn:lem22}\\
= &~\sum_{i=1}^N \gb_i(\xb_{i,t+1}),\label{eqn:lem23}
\end{align}
\end{subequations}
where (\ref{eqn:lem21}) follows from relation (\ref{eqn:algo5}); (\ref{eqn:lem22}) follows from equality (\ref{eqn:eq});
and (\ref{eqn:lem23}) follows from the induction hypothesis in (\ref{eqn:hyp}).
Therefore, equation  (\ref{eqn:hyp}) also holds for $t+1$ and this proves that equation (\ref{eqn:hyp}) holds for any $t \ge 1$. $\blacksquare$

\sml{
\subsection{Proof of Lemma \ref{lem:bnd}\label{app:bnd}}
Using the triangle inequality, we can write
\begin{align}\label{eqn:bnd1}
\|\yb_{i,t}\| \le \|\yb_{i,t}-\byb_t\| + \|\byb_t\|
\end{align}
From Lemma \ref{lem:rec} and relation \eqref{eqn:C_g}, the second term can be bounded as
\begin{align}\label{eqn:yberr}
\|\byb_t\| = \frac{1}{N}\|N\byb_t\| = \frac{1}{N}\left\|\sum_{i=1}^N \gb_i(\xb_{i,t})\right\| \le C_{\gb}.
\end{align}
In order to bound the first term in \eqref{eqn:bnd1}, we make use of Lemma \ref{lem:disagreegen}.
Note that from Assumption \ref{assume:L}, the error in \eqref{eqn:ebl} can be bounded for any $i \in \Vc$ and $t\ge 1$
\begin{align*}
\|\eb_{i,t+1}\| = &~\|\gb_i(\xb_{i,t+1}) - \gb_i(\xb_{i,t})\|\nonumber\\
\le &~L_{\gb}\|\xb_{i,t+1} - \xb_{i,t}\|
\le 2L_{\gb}C_{\xb}.
\end{align*}
Therefore, from Lemma \ref{lem:disagreegen} and this relation, we have for any $t\ge 1$
\begin{align*}
\|\yb_{i,t+1}-\byb_{t+1}\| \le &~N \g \max_j\|\yb_{j,1}\| \\
&~+ \frac{2\g\b}{1-\b} L_{\gb}C_{\xb}
+ 4L_{\gb}C_{\xb},
\end{align*}
and
\begin{align*}
\|\yb_{i,t}-\byb_t\| \le &~ \max\bigg\{ \|\yb_{i,1}-\byb_1\|,N \g \max_j\|\yb_{j,1}\| \\
&~+ \frac{2\g\b}{1-\b} L_{\gb}C_{\xb}
+ 4L_{\gb}C_{\xb}\bigg\}.
\end{align*}
Substituting this and relation \eqref{eqn:yberr} into \eqref{eqn:bnd1}, we can obtain the desired result.
}

\subsection{Proof of Corollary \ref{cor:sublinear}\label{app:sublinear}}
By summing the relation (\ref{eqn:theta}) over $t = 1,\ldots, T-1$, we obtain
\begin{align}\label{eqn:cor}
\sum_{t=1}^{T-1}\|\tb_{i,t+1}&-\btb_{t+1}\| \le N \g \max_j\|\tb_{j,1}\|\sum_{t=1}^{T-1}\b^t \\
&~+ \g \sum_{t=1}^{T-1}\sum_{\ell=1}^{t-1}\b^{t-\ell}\sum_{j=1}^N \|\eb_{j,\ell+1}\| \nonumber\\
&~ + \frac{1}{N}\sum_{t=1}^{T-1}\sum_{j=1}^N \|\eb_{j,t+1}\| + \sum_{t=1}^{T-1}\|\eb_{i,t+1}\|.\nonumber
\end{align}
By rearranging the summations, the second term on the right-hand side of the above relation can be bounded as:
\begin{align}
\sum_{t=1}^{T-1}&\sum_{\ell=1}^{t-1}\b^{t-\ell}\sum_{j=1}^N \|\eb_{j,\ell+1}\|
\le  NK\sum_{t=1}^{T-1}\sum_{\ell=1}^{t-1}\b^{t-\ell} \a_t \nonumber\\
= &~ NK\sum_{\ell=1}^{T-2}\b^{\ell} \sum_{t=1}^{T-\ell-1}\a_t
\le  NK\sum_{\ell=1}^{T-2}\b^{\ell} \sum_{t=1}^{T-1}\a_t. \label{eqn:cor3}
\end{align}
Using $\sum_{t=1}^{T-1}\b^t \le \frac{\b}{1-\b}$, relation (\ref{eqn:cor3}), and the assumption $\|\eb_{i,t}\|\le K\a_t$ to upper bound inequality (\ref{eqn:cor}), we obtain the desired result. $\blacksquare$

\subsection{Proof of Lemma \ref{lem:disagree}\label{app:disagree}}
First, we obtain the following chain of relations:
\begin{subequations}\label{eqn:lem2a}
\begin{align}
&\sum_{i=1}^N\|\tlb_{i,t}\hspace{-0.5mm}-\hspace{-0.5mm}\blb_t\| \le \sum_{i=1}^N\hspace{-0.5mm}\sum_{j=1}^N [W_t]_{ij}\|\lb_{j,t}\hspace{-0.5mm}-\hspace{-0.5mm}\blb_t\|\label{eqn:lem2a1}\\
\le &~ \sum_{j=1}^N \hspace{-0.5mm}\|\lb_{j,t}\hspace{-0.5mm}-\hspace{-0.5mm}\blb_t\| \sum_{i=1}^N\hspace{-0.5mm} [W_t]_{ij}\label{eqn:lem2a2}\\
= &~ \sum_{i=1}^N \|\lb_{i,t}-\blb_t\|,\label{eqn:lem2a3}
\end{align}
\end{subequations}
where (\ref{eqn:lem2a1}) follows from the definition of $\tlb_{i,t}$ in (\ref{eqn:algo1}) and the convexity of the norm function and the doubly stochasticity of $W_t$; (\ref{eqn:lem2a2}) follows from reordering of the summations; and (\ref{eqn:lem2a3}) follows from the doubly stochasticity of $W_t$. 
Similarly, we can also show that
\begin{align}\label{eqn:lem2b}
\sum_{i=1}^N \|\tyb_{i,t}-\byb_t\|\le \sum_{i=1}^N\|\yb_{i,t}-\byb_t\|.
\end{align}

In what follows, we estimate $\|\eb_{i,t+1}\|$ in (\ref{eqn:ebx}) to make use of Corollary \ref{cor:sublinear}:
\begin{subequations}
\begin{align}
\|\eb_{i,t+1}\| = &~ \left\|P_{\Lambda}\left[\tlb_{i,t}+\frac{\a_t}{N}\F{N\tyb_{i,t}}\right]-\tlb_{i,t}\right\|\nonumber\\
\le &~ \frac{\a_t}{N}\left\|\F{N\tyb_{i,t}}\right\|\label{eqn:lemdis1}\\
\le &~ \frac{\a_t}{N} C_{\Fc},\label{eqn:lemdis2}
\end{align}
\end{subequations}
where (\ref{eqn:lemdis1}) follows from the nonexpansiveness of the projection operator together with the fact that $\tlb_{i,t} \in \Lambda$ and (\ref{eqn:lemdis2}) follows from the relation (\ref{eqn:C_Fc}).
Now we can invoke Corollary \ref{cor:sublinear} with $\tb_{i,t} := \lb_{i,t}$ and $K := \frac{C_{\Fc}}{N}$. Hence, we have for all $i \in \Vc$
\begin{align*}
\sum_{t=1}^{T-1}\|\lb_{i,t+1}&-\blb_{t+1}\| \le \frac{\g N\b}{1-\b}\max_j\|\lb_{j,1}\| \\
&~+ \left(\frac{\g C_{\Fc}\b}{1-\b}+ \frac{2C_{\Fc}}{N}\right)\sum_{t=1}^{T-1}\a_t,
\end{align*}
and consequently
\begin{align}\label{eqn:lem31}
\sum_{t=1}^T&\|\lb_{i,t}-\blb_t\| \le \|\lb_{i,1}-\blb_1\| + \frac{\g N\b}{1-\b}\max_j\|\lb_{j,1}\| \nonumber\\
&~+ \left(\frac{\g C_{\Fc}\b}{1-\b}+ \frac{2C_{\Fc}}{N}\right)\sum_{t=1}^{T}\a_t.
\end{align}
From the definition of the step size $\a_t$, we have
\[
\sum_{t=1}^T \a_t = \sum_{t=1}^T \frac{1}{\sqrt{t}} \le 1 + \int_1^T \frac{1}{\sqrt{t}}dt \le 2\sqrt{T}-1.
\]
Combining this with relation (\ref{eqn:lem31}), using Assumption \ref{assume:L} and relation (\ref{eqn:lem2a}), we obtain the desired result.

Similarly, the error $\|\eb_{i,t+1}\|$ in (\ref{eqn:ebl}) can be estimated as follows:
\begin{subequations}
\begin{align}
\|&\eb_{i,t+1}\| =  \left\|\gb_i(\xb_{i,t+1})-\gb_i(\xb_{i,t})\right\|\\
\le &~ L_{\gb}\left\|\xb_{i,t+1}-\xb_{i,t}\right\|\label{eqn:lem4b1}\\
= &~ L_{\gb}\left\|P_{X_i}[\xb_{i,t}-\a_t\sb_{i,t}]-\xb_{i,t}\right\| \label{eqn:lem4b2}\\
\le &~ \a_tL_{\gb}\left(L_f + \frac{1}{N}L_{\gb}L_{\Fc}C_{\lb}\right),\label{eqn:lem4b3}
\end{align}
\end{subequations}
where (\ref{eqn:lem4b1}) follows from relation (\ref{eqn:L_g2});
(\ref{eqn:lem4b2}) follows from the definition of $\xb_{i,t+1}$ in algorithm (\ref{eqn:algo3}) and the nonexpansiveness of the projection operator $P_{X_i}[\cdot]$;
and (\ref{eqn:lem4b3}) follows from the definition of $\sb_{i,t}$ in (\ref{eqn:sb}) and $\|\sb_{i,t}\| \le L_f +\frac{1}{N} L_{\gb}L_{\Fc}C_{\lda}$.
By invoking Corollary \ref{cor:sublinear} with $\tb_{i,t} := \yb_{i,t}$ and $K := L_{\gb}L_f + \frac{1}{N}L_{\gb}^2L_{\Fc}C_{\lb}$ and using the exactly same line of arguments in estimating $\sum_{t=1}^T\|\lb_{i,t}-\blb_t\|$, the desired result follows. $\blacksquare$

\subsection{Proof of Lemma \ref{lem:iter}\label{app:iter}}
\begin{itemize}
\item[(a)]
From algorithm (\ref{eqn:algo3}) and the nonexpansivenss of the projection operator $P_{X}[\cdot]$, we have for any $\xb \in X$ and $t \ge 1$:
\begin{align}\label{eqn:lem1_iter}
\sum_{i=1}^N\|&\xb_{i,t+1}-\xb_i\|^2 \\
\le &~ \sum_{i=1}^N\|\xb_{i,t} - \a_t \sb_{i,t} -\xb_i\|^2 \nonumber\\
= &~\sum_{i=1}^N\|\xb_{i,t} - \xb_i\|^2 + \a_t^2 \sum_{i=1}^N\|\sb_{i,t}\|^2 \nonumber\\
&~- 2\a_t \sum_{i=1}^N\sb_{i,t}'(\xb_{i,t}-\xb_i).\nonumber
\end{align}
We can rewrite the last term on the right-hand side as the following:
\begin{subequations}
\begin{align}
&\sum_{i=1}^N\sb_{i,t}'(\xb_{i,t}-\xb_i)\nonumber\\
&= \sum_{i=1}^N\nabla f_{i,t}(\xb_{i,t})'(\xb_{i,t}-\xb_i)\label{eqn:lem1a1} \\
&~+ \frac{1}{N}\sum_{i=1}^N\left[\nabla \gb_i(\xb_{i,t})'\nabla{\F{N\tyb_{i,t}}}\tlb_{i,t}\right]'(\xb_{i,t}-\xb_i)\nonumber\\
&= \hspace{-0.7mm}\sum_{i=1}^N\hspace{-0.7mm}\left[\nabla f_{i,t}(\xb_{i,t})\hspace{-0.7mm}+\hspace{-0.7mm}\frac{1}{N}\nabla \gb_i(\xb_{i,t})'\nabla{\F{N\bar{\yb}_t}}\bar{\lb}_t\right]'(\xb_{i,t}\hspace{-0.7mm}-\hspace{-0.7mm}\xb_i)\nonumber\\
&~\hspace{-0.7mm}+ \hspace{-0.7mm}\frac{1}{N}\sum_{i=1}^N\hspace{-0.7mm}\left[\nabla \gb_i(\xb_{i,t}\hspace{-0.3mm})'\hspace{-0.5mm}\left(\nabla\Fc(N\tyb_{i,t}\hspace{-0.3mm})\hspace{-0.7mm}
-\hspace{-0.7mm}\nabla\Fc(N\bar{\yb}_t\hspace{-0.3mm})\right)\hspace{-0.3mm}\bar{\lb}_t \right]'\hspace{-0.5mm}(\xb_{i,t}\hspace{-0.7mm}-\hspace{-0.7mm}\xb_i\hspace{-0.3mm})\nonumber\\
&~+ \frac{1}{N}\sum_{i=1}^N \left[\nabla \gb_i(\xb_{i,t})'\nabla\F{N\tyb_{i,t}}(\tlb_{i,t}-\bar{\lb}_t)\right]'(\xb_{i,t}-\xb_i)\label{eqn:lem1a2} \\
& \ge\hspace{-0.7mm}\sum_{i=1}^N \hspace{-0.7mm}\left[\nabla f_{i,t}(\xb_{i,t})\hspace{-0.7mm}+\hspace{-0.7mm}\frac{1}{N}\nabla \gb_i(\xb_{i,t})'\nabla{\F{N\bar{\yb}_t}}\bar{\lb}_t\right]'(\xb_{i,t}\hspace{-0.7mm}-\hspace{-0.7mm}\xb_i) \nonumber\\
&~-2C_{\xb}C_{\lb}L_{\gb}G_{\Fc}\sum_{i=1}^N\|\tyb_{i,t}-\bar{\yb}_t\|\nonumber\\
&~-\frac{2}{N}C_{\xb}L_{\gb}L_{\Fc}\sum_{i=1}^N\|\tlb_{i,t}-\bar{\lb}_t\|\label{eqn:lem1a3}\\
&\ge \Hc_t(\xb_t,\bar{\lb}_t) - \Hc_t(\xb,\bar{\lb}_t)\label{eqn:lem1a4}\\
&~-2C_{\xb}C_{\lb}L_{\gb}G_{\Fc}\sum_{i=1}^N\|\tyb_{i,t}-\bar{\yb}_t\|\nonumber\\
&~-\frac{2}{N}C_{\xb}L_{\gb}L_{\Fc}\sum_{i=1}^N\|\tlb_{i,t}-\bar{\lb}_t\|\nonumber
\end{align}
\end{subequations}
where (\ref{eqn:lem1a1}) follows from the definition of $\sb_{i,t}$ in (\ref{eqn:sb});
(\ref{eqn:lem1a2}) follows from adding and subtracting terms accordingly;
(\ref{eqn:lem1a3}) follows from the Schwarz inequality, Assumptions \ref{assume:L}-\ref{assume:Fc} and relation (\ref{eqn:L_g1});
and (\ref{eqn:lem1a4}) follows from the convexity of $\Hc_t(\cdot,\bar{\lb}_t)$.
Combining (\ref{eqn:lem1a1})-(\ref{eqn:lem1a4}) with (\ref{eqn:lem1_iter}), using $\|\sb_{i,t}\| \le L_f + \frac{1}{N} L_{\gb}L_{\Fc}C_{\lda}$ and rearranging terms, we obtain
the desired result.
\item[(b)]
From algorithm (\ref{eqn:algo4}), we have for any $\lb \in \Lambda$ and $t \ge 1$:
\begin{align}\label{eqn:lem1_iter2}
\sum_{i=1}^N\|&\lb_{i,t+1}-\lb\|^2\\
= &~ \sum_{i=1}^N\|\tlb_{i,t} + \frac{\a_t}{N} \F{N\tyb_{i,t}}-\lb\|^2 \nonumber\\
= &~\sum_{i=1}^N\|\tlb_{i,t} - \lb\|^2 + \frac{\a_t^2}{N^2} \sum_{i=1}^N\|\F{N\tyb_{i,t}}\|^2 \nonumber\\
&~+ \frac{2\a_t}{N} \sum_{i=1}^N\F{N\tyb_{i,t}}'(\tlb_{i,t}-\lb).\nonumber
\end{align}
Similarly to (\ref{eqn:lem1a1})-(\ref{eqn:lem1a4}), we have the following chain of relations for the last term on the right-hand side:
\begin{subequations}
\begin{align}
&\frac{1}{N}\sum_{i=1}^N \F{N\tyb_{i,t}}'(\tlb_{i,t}-\lb)\nonumber\\
= &~ \frac{1}{N}\sum_{i=1}^N\Bigg[ \F{N\bar{\yb}_t}'(\bar{\lb}_t-\lb)
+ \F{N\bar{\yb}_t}'(\tlb_{i,t}-\bar{\lb}_t)\Bigg]\nonumber\\
&~+ \frac{1}{N}\sum_{i=1}^N \left(\F{N\tyb_{i,t}}-\F{N\bar{\yb}_t}\right)'(\tlb_{i,t}-\lb)\label{eqn:lem1b5}\\
\le &~ \Hc_t(\xb_t,\bar{\lb}_t) -\Hc_t(\xb_t,\lb)\label{eqn:lem1b6}\\
&~ + \frac{C_{\Fc}}{N}\sum_{i=1}^N\|\tlb_{i,t}-\bar{\lb}_t\| + 2C_{\lda}L_{\Fc}\sum_{i=1}^N\left\|\tyb_{i,t}- \bar{\yb}_t\right\|,\nonumber
\end{align}
\end{subequations}
where (\ref{eqn:lem1b5}) follows from adding and subtracting terms accordingly;
(\ref{eqn:lem1b6}) follows from the definition of $\Hc_{i,t}$ together with the relation $N\bar{\yb}_t = \sum_{\l=1}^N\gb_{\l}(\xb_{\l,t})$ from Lemma \ref{lem:rec}, Schwartz inequalities, and the boundedness Assumptions.
Combining this relation with (\ref{eqn:lem1_iter2}), using $\left\|\F{N\tyb_{i,t}}\right\| \le C_{\Fc}$, $\sum_{i=1}^N \|\tlb_{i,t}-\lb\|^2 \le \sum_{i=1}^N \|\lb_{i,t}-\lb\|^2$ and rearranging terms, we obtain the desired result.
$\blacksquare$
\end{itemize}

\end{document}